\documentclass[a4paper]{amsart}
\usepackage{amsthm,amsfonts,amsmath,amssymb}
\usepackage[abs]{overpic}

\newtheorem{theorem}{Theorem}[section]
\newtheorem{lemma}[theorem]{Lemma}

\newtheorem{proposition}[theorem]{Proposition}
\newtheorem{corollary}[theorem]{Corollary}

\theoremstyle{definition}
\newtheorem{definition}[theorem]{Definition}

\newtheorem{remark}[theorem]{Remark}

\theoremstyle{remark}

\newcommand{\Z}{\mathbb{Z}}

\makeatletter

\@addtoreset{figure}{section}
\makeatother

\makeatletter
  
  \@addtoreset{equation}{section}
\makeatother

\setcounter{topnumber}{100}
\setcounter{bottomnumber}{100}
\setcounter{totalnumber}{100}

\begin{document}
\title{Virtualized Delta moves for virtual knots and links}

\author{Takuji NAKAMURA}
\address{Faculty of Education, 
University of Yamanashi,
Takeda 4-4-37, Kofu, Yamanashi, 400-8510, Japan}
\email{takunakamura@yamanashi.ac.jp}

\author{Yasutaka NAKANISHI}
\address{Department of Mathematics, Kobe University, 
Rokkodai-cho 1-1, Nada-ku, Kobe 657-8501, Japan}
\email{nakanisi@math.kobe-u.ac.jp}

\author{Shin SATOH}
\address{Department of Mathematics, Kobe University, 
Rokkodai-cho 1-1, Nada-ku, Kobe 657-8501, Japan}
\email{shin@math.kobe-u.ac.jp}

\author[Kodai Wada]{Kodai Wada}
\address{Department of Mathematics, Kobe University, Rokkodai-cho 1-1, Nada-ku, Kobe 657-8501, Japan}
\email{wada@math.kobe-u.ac.jp}

\makeatletter
\@namedef{subjclassname@2020}{%
  \textup{2020} Mathematics Subject Classification}
\makeatother
\subjclass[2020]{57K12, 57K10}

\keywords{virtual knot, virtual link, virtualized $\Delta$-move, odd writhe}

\thanks{This work was supported by JSPS KAKENHI Grant Numbers 
JP20K03621, JP19K03492, JP22K03287, and JP23K12973.}



\begin{abstract}
We introduce a local deformation called the virtualized $\Delta$-move for virtual knots and links. 
We prove that the virtualized $\Delta$-move is an unknotting operation for virtual knots. 
Furthermore we give a necessary and sufficient condition 
for two virtual links to be related by a finite sequence of virtualized $\Delta$-moves. 
\end{abstract}

\maketitle

\section{Introduction} 

Although the crossing change is elemental among local deformations 
in classical knot theory, 
the virtualization replacing a real crossing with a virtual crossing 
is considered as a more elemental deformation 
in virtual knot theory; 
in fact, a crossing change is realized by two virtualizations. 

In this paper, we will introduce an elemental version of a $\Delta$-move. 
Here, the $\Delta$-move is one of the important local deformations 
in classical knot theory. 
In fact, 
it is an unknotting operation for classical knots, 
and characterizes classical links with the same pairwise linking numbers.

\begin{definition}\label{def-vd}
A {\it virtualized $\Delta$-move} is a local deformation 
on a virtual link diagram 
as shown in {\rm Figure~\ref{virtualized-delta}}. 
We denote it by $v\Delta$ in figures. 
\end{definition}

\begin{figure}[htbp]
  \centering
    \begin{overpic}[width=8cm]{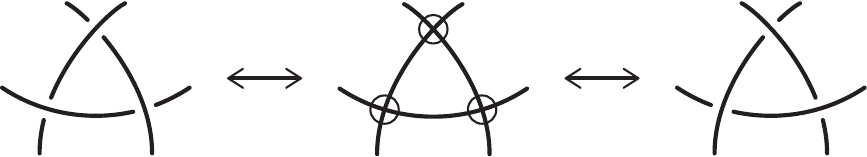}
      \put(63,25){$v\Delta$}
      \put(152,25){$v\Delta$}
    \end{overpic}
  \caption{A virtualized $\Delta$-move}
  \label{virtualized-delta}
\end{figure}

By definition, 
we see that a $\Delta$-move is realized by a combination of two 
virtualized $\Delta$-moves and 
a generalized Reidemeister move VI. 
In this sense, 
we can say that the virtualized $\Delta$-move 
is more elemental than the $\Delta$-move.

\begin{definition}
Two virtual links $L$ and $L'$ are 
{\it $v\Delta$-equivalent} to each other if their diagrams 
are related by a finite sequence of 
virtualized $\Delta$-moves and 
generalized Reidemeister moves. 
\end{definition}

For virtual knots, 
we have the following.

\begin{theorem}\label{thm13}
Any virtual knot is $v\Delta$-equivalent 
to the trivial knot; 
that is, the virtualized $\Delta$-move 
is an unknotting operation 
for virtual knots. 
\end{theorem}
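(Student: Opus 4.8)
The plan is to reduce Theorem~\ref{thm13} to the following local claim: in any virtual knot diagram, a chosen real crossing can be replaced by a virtual crossing (with the rest of the diagram left untouched) by a finite sequence of virtualized $\Delta$-moves and generalized Reidemeister moves. Granting this, the theorem follows in a few lines. Starting from a diagram $D$ of a given virtual knot with real crossings $c_1,\dots,c_n$, apply the local claim to $c_1$, then to $c_2$ in the resulting diagram, and so on; after $n$ steps we obtain a diagram $D'$ all of whose crossings are virtual. The underlying projection of $D'$ is a single immersed circle, so every virtual crossing can be removed by detour moves, which are themselves consequences of the generalized Reidemeister moves. Hence $D'$ is $v\Delta$-equivalent to the standard crossingless diagram of the trivial knot, and therefore so is $D$.

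Thus the substantive part is the local claim, which I would isolate as a lemma proved before Theorem~\ref{thm13}. To prove it, work in a disk neighbourhood $B$ of the chosen real crossing $c$ containing no other crossing, and first enlarge the picture inside $B$ using generalized Reidemeister moves — for instance a Reidemeister~I move to introduce an auxiliary kink next to $c$, or a Reidemeister~II move to introduce an auxiliary bigon — so that the local tangle contains a copy of the left-hand side of Figure~\ref{virtualized-delta}. Apply a virtualized $\Delta$-move there. In the resulting tangle the over/under data at $c$ has been traded for virtual crossings, and deleting the auxiliary crossings again by Reidemeister~I/II and detour moves leaves exactly the disk $B$ with a single virtual crossing in place of $c$; crucially, nothing outside $B$ is altered and the global connectivity of the diagram is unchanged. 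The two signs of $c$ are handled symmetrically by reflecting the local model.

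The step I expect to be the main obstacle is precisely this local verification: choosing the auxiliary Reidemeister moves so that a virtualized $\Delta$-move genuinely applies to the enlarged tangle, and then checking by an explicit (but finite) diagram chase that what remains after cleanup is a virtualization of $c$ and nothing more, in particular that no stray real crossings survive. An alternative route that trades diagram chasing for combinatorics is to pass to Gauss diagrams: it suffices to show that the virtualized $\Delta$-move, together with the chord-diagram forms of the Reidemeister moves, can delete an arbitrary chord, since then one empties the Gauss diagram of any virtual knot and reaches the trivial knot. Either way, once the local claim is established, Theorem~\ref{thm13} is immediate.
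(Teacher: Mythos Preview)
Your central local claim is false, and this is not a matter of finding the right diagram chase: there is a genuine obstruction. In a $2$-strand tangle inside your disk $B$, count modulo $2$ the number of real crossings between the two distinct strands. Reidemeister moves I--III preserve this parity (R2 between the two strands changes the count by $\pm 2$), detour moves do not touch real crossings, and a $v\Delta$-move also preserves it: the three arcs in the $\Delta$-triangle lie on at most two strands, so either $0$ or $2$ of the three crossings being removed/created are mixed crossings. But virtualizing the single crossing $c$ changes this count from $1$ to $0$. Hence no sequence of $v\Delta$-moves and generalized Reidemeister moves confined to $B$ can realize a virtualization of $c$. (This is precisely the invariant $p_i$ that the paper later introduces for links, read on the two strands of the tangle.) So the approach of virtualizing crossings one at a time, locally, cannot succeed as stated.

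The paper's proof avoids this by never attempting a local virtualization. Instead it shows, by explicit pictures, that a single $v\Delta$-move together with generalized Reidemeister moves realizes a \emph{crossing change} (Lemma~\ref{lem-cc}), that two $v\Delta$-moves realize the forbidden detour move (Lemma~\ref{lem-fd}), and hence that forbidden moves are generated (Lemma~\ref{lem-forbidden}); the theorem then follows from the known fact that forbidden moves unknot virtual knots. Your Gauss-diagram alternative---delete an arbitrary chord---is a correct sufficient condition, but it is a \emph{global} statement and proving it amounts to showing that adjacent chord endpoints can be swapped, which is exactly the forbidden (or forbidden-detour) move; so that route collapses to the paper's argument rather than bypassing it.
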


Let ${\rm u}_{v\Delta}(K)$ 
be the minimal number of virtualized $\Delta$-moves 
which needs to deform $K$ into the trivial knot.

\begin{theorem}\label{thm14}
For any integer $m\geq 1$, 
there are infinitely many virtual knots $K$ 
with ${\rm u}_{v\Delta}(K)=m$. 
\end{theorem}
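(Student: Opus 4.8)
The plan is to bound $\mathrm{u}_{v\Delta}$ from below by a virtual knot invariant that changes by a controlled amount under a single virtualized $\Delta$-move, and then, for each $m$, to exhibit an explicit infinite family of virtual knots on which this bound is attained.

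For the lower bound I would use the $n$-writhes and the associated writhe polynomial. Recall that a virtual knot $K$ carries integers $J_n(K)$ for $n\in\Z\setminus\{0\}$, only finitely many nonzero, where $J_n(K)$ is the sum of the signs of the real crossings of a diagram whose index equals $n$; these assemble into the invariant $W_K(t)=\sum_{n\neq 0}J_n(K)(t^n-1)$, which vanishes on the trivial knot and whose value at $t=-1$ is $-2$ times the odd writhe $J(K)=\sum_{n\ \mathrm{odd}}J_n(K)$. The heart of the matter is a local lemma: by inspecting the three strands involved in a virtualized $\Delta$-move and recording the sign and the index of each real crossing before and after the move, one shows that a single $v\Delta$-move changes $W_K(t)$ only by an elementary term — I expect one of the form $\pm(t^{a}-1)$ or $\pm(t^{a}-t^{b})$, with the exponents determined by the local orientations together with the index labels of the strands entering the tangle. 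Granting this, each $v\Delta$-move changes the coefficient of any fixed $t^{a}$ with $a\neq 0$ by at most $1$ and changes $W_K(-1)$ by at most $2$, so $\mathrm{u}_{v\Delta}(K)\ge\max_{a\neq 0}|J_a(K)|$ and $\mathrm{u}_{v\Delta}(K)\ge|J(K)|$; invariance under the generalized Reidemeister moves is automatic since $W_K$ is an invariant.

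For the upper bound and the infinitude, I would fix $m\ge 1$ and, for each choice of $m$ distinct odd integers $a_1<\dots<a_m$, build a virtual knot $K=K(a_1,\dots,a_m)$ from the trivial knot by $m$ virtualized $\Delta$-moves performed in disjoint regions, the $i$-th of which adds $(t^{a_i}-1)$ to $W_K$; the freedom to realize a prescribed large exponent comes from first sliding the relevant strand around the diagram to inflate its index before applying the move. Then $W_K(t)=\sum_{i}(t^{a_i}-1)$, so $J(K)=m$, which forces $\mathrm{u}_{v\Delta}(K)\ge m$ by the lower bound, while the construction gives $\mathrm{u}_{v\Delta}(K)\le m$; hence $\mathrm{u}_{v\Delta}(K)=m$. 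Distinct choices of $\{a_1,\dots,a_m\}$ yield distinct writhe polynomials and hence pairwise distinct virtual knots, so there are infinitely many with $\mathrm{u}_{v\Delta}=m$.

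The main obstacle is the local lemma on the effect of a $v\Delta$-move on $W_K(t)$: one must enumerate the orientation patterns on the three strands, compute the index of each real crossing inside the tangle (which depends only on the local combinatorics and on the index values of the incoming strands), and verify that in every case all contributions cancel except a single elementary term — and, crucially for the upper bound, identify which such terms can be prescribed, so that the construction is valid and still rich enough to produce infinitely many knots. A subsidiary point is to check that the $m$ chosen moves can be carried out in disjoint regions of a diagram of the trivial knot, so that their effects on $W_K$ simply add.
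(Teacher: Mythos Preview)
Your overall strategy---bound $\mathrm{u}_{v\Delta}$ below via the odd writhe and then exhibit explicit families---is the same as the paper's, but the quantitative core of your local lemma is wrong, and the error propagates to both halves of the argument. A virtualized $\Delta$-move deletes \emph{three} chords from the Gauss diagram, and it can also shift the indices of the remaining chords (only the \emph{parity} of each remaining index is preserved, because the six deleted endpoints sit in three adjacent pairs on the circle). Hence the change in $W_K(t)$ is not of the form $\pm(t^a-1)$ or $\pm(t^a-t^b)$ that you posit: the paper exhibits a long virtual knot $T_s$ unknotted by a single $v\Delta$-move whose closure has $J_2=2s$, so one move can change an individual $n$-writhe by an arbitrarily large amount, and your bound $\mathrm{u}_{v\Delta}(K)\ge\max_{a\neq 0}|J_a(K)|$ is false. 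For the odd writhe the picture is cleaner but still not what you claim: among the three deleted chords exactly zero or two have odd index, so $J(K)$ changes by $0$ or $\pm 2$ per move (equivalently $W_K(-1)$ can jump by $4$, not $2$). The correct inequality is therefore $\mathrm{u}_{v\Delta}(K)\ge\tfrac{1}{2}|J(K)|$, and the paper's families realize $|J|=2m$ with $\mathrm{u}_{v\Delta}=m$, so the factor $\tfrac{1}{2}$ is sharp and your stated bound $\mathrm{u}_{v\Delta}(K)\ge|J(K)|$ cannot hold.

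This factor of two also breaks your construction. If each of your $m$ moves contributed a single term $t^{a_i}-1$ with $a_i$ odd, you would obtain $|J(K)|=m$, and the corrected lower bound yields only $\mathrm{u}_{v\Delta}(K)\ge m/2$, which does not meet the upper bound $m$. The remedy is to arrange that each move changes $J$ by exactly $\pm 2$ with a consistent sign, so that $|J|=2m$. The paper does this in two ways: one family is unknotted by $m$ crossing changes (each realized by one $v\Delta$-move via Lemma~\ref{lem-cc}) and has $J=-2m$; the other is the closure of $m$ copies of a long knot unknotted by a single $v\Delta$-move and has $J=2m$. Infinitely many distinct examples are then separated not by prescribing the support of $W_K$ (which, as above, you cannot control so finely) but by the Jones polynomial in the first family and by the value of $J_2$ in the second.
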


For an $n$-component virtual link $L$ with $n\geq2$, 
we will define invariants $p_i(L)\in\{0,1\}$ 
($i=1,\dots,n$). 
Then we have the following.

\begin{theorem}\label{thm15}
For $n\geq 2$, 
two $n$-component virtual links $L$ 
and $L'$ are $v\Delta$-equivalent to each other 
if and only if $p_i(L)=p_i(L')$ holds for any $i=1,\dots,n$. 
\end{theorem}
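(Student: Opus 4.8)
The plan is first to pin down the invariants $p_i(L)$ and check they are $v\Delta$-invariant. The natural candidate, in analogy with the odd writhe for virtual knots, is to take the $i$-th component $K_i$ of a diagram $D$ of $L$, look at the self-crossings of $K_i$, and declare a self-crossing to be \emph{odd} if the corresponding Gauss chord of $K_i$ is linked with an odd number of other chords of $K_i$ (equivalently, the two arcs of $K_i$ cut out by the crossing each meet the rest of $K_i$ an odd number of times). Set $p_i(D)$ to be the number of odd self-crossings of $K_i$, reduced mod $2$. I would verify invariance under all generalized Reidemeister moves exactly as one does for the odd writhe — R1 adds an even (in fact, null) chord, R2 adds a cancelling pair, R3 and the virtual/mixed moves permute chords without changing parities — and then check that a single virtualized $\Delta$-move, which by Figure~\ref{virtualized-delta} deletes three real crossings and introduces virtual crossings, changes $p_i$ by an even amount: the three chords involved change the mutual linking parities of each other in a way that the total count of odd self-chords of $K_i$ shifts by $0$ or $2$. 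This is the routine but essential bookkeeping step; I expect to organize it by recording, for each of the two arcs modified in the move, the parity of its intersection number with $K_i$ and with each other component, and observing these parities are preserved.

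\textbf{The "if" direction: reduction to a normal form.}
For the converse, suppose $p_i(L)=p_i(L')$ for all $i$. The strategy is to show that every $n$-component virtual link $L$ is $v\Delta$-equivalent to a standard model depending only on $(p_1(L),\dots,p_n(L))$ — concretely, the split union of $n$ virtual knots, where the $i$-th summand is the trivial knot if $p_i(L)=0$ and a fixed virtual knot with odd writhe $1$ (the simplest "kink-like" virtual knot) if $p_i(L)=1$. Granting this, if $p_i(L)=p_i(L')$ for all $i$ then $L$ and $L'$ are $v\Delta$-equivalent to the \emph{same} model, hence to each other. To reach the model I would proceed component by component and crossing-type by crossing-type: first use Theorem~\ref{thm13} (applied in the presence of the other components — one must check the unknotting argument there is local and so goes through with extra strands around) to simplify each component, then handle the crossings between distinct components. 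The key local fact to establish is that a virtualized $\Delta$-move lets one virtualize a chosen pair of crossings between two different components, or equivalently pass one component through another; combined with the fact that a classical $\Delta$-move (hence two $v\Delta$-moves plus a move VI, as noted after Definition~\ref{def-vd}) does not change pairwise linking data, one shows that all inter-component crossings can be removed up to $v\Delta$-equivalence, splitting $L$ into a disjoint union of virtual knots. Then each knot is killed or normalized by Theorem~\ref{thm13} and its refinement — but one must be careful: unknotting a single component may a priori be obstructed, which is exactly why the leftover invariant $p_i$ survives, so the correct statement is that each component can be brought to its $p_i$-model, not necessarily to the unknot.

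\textbf{Main obstacle.}
The principal difficulty is the self-crossing analysis within a single component in the \emph{linked} setting: I need to show that $p_i(L)$ is the only obstruction to making component $i$ trivial by $v\Delta$-moves, i.e. that a virtual knot with $p_i=0$ becomes unknotted and one with $p_i=1$ becomes the standard model, even while dragging the other $n-1$ components along. This requires a careful local version of the proof of Theorem~\ref{thm13} together with a mechanism (via $v\Delta$-moves) for moving the other components out of the way so that the unknotting moves can be performed in an unobstructed disk; controlling the interaction between "clearing space" and "preserving the $p_j$ of the other components" is where the argument is most delicate. A secondary but more mechanical obstacle is confirming that the chosen $p_i$-models are themselves pairwise inequivalent and in normal form — this follows from the "only if" direction once invariance is established, so it is not deep, just a consistency check to record at the end.
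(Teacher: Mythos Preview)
Your proposal rests on a misidentification of the invariant $p_i(L)$. In the paper, $p_i(L)$ is \emph{not} built from self-crossings of $K_i$: it is the parity of the number of endpoints of \emph{nonself}-chords lying on the $i$th circle $G_i$ of a Gauss diagram --- equivalently, the parity of the number of real crossings between $K_i$ and the remaining components. With the correct definition, any split link has $p_i=0$ for every $i$, so your proposed normal form (a split union of virtual knots) cannot represent a $v\Delta$-class with some $p_i=1$; such classes exist, for instance the two-component link whose Gauss diagram has a single nonself-chord and no self-chords. Thus the step ``remove all inter-component crossings and split $L$'' is impossible in general, and this is precisely what the paper's $p_i$ obstructs.

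Your candidate invariant also has internal problems. If ``other chords of $K_i$'' means the remaining self-chords, then by the handshaking lemma on the interlacement graph the number of odd self-chords is always even, so your $p_i$ would be identically zero; and your proposed model ``a virtual knot with odd writhe $1$'' does not exist, since $J(K)$ is even for every virtual knot. The paper's argument runs in the opposite direction from yours: using Lemmas~\ref{lem-reverse} and~\ref{lem-switch} together with Reidemeister moves, one first eliminates all \emph{self}-chords, then reroutes and cancels nonself-chords until at most one remains between $G_1$ and each $G_i$ and none between any other pair (Proposition~\ref{prop-form}); the easy invariance of $p_i$ under $v\Delta$ (Lemma~\ref{lem-parity}) then pins down the resulting normal form $M(a_2,\dots,a_n)$ uniquely.
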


This paper is organized as follows. 
In Section~\ref{sec2}, 
we study several properties of 
virtualized $\Delta$-moves 
for virtual knots, 
and prove Theorems~\ref{thm13} 
and \ref{thm14}. 
In Section~\ref{sec3}, 
we study the behavior of virtualized $\Delta$-moves 
for virtual links, 
and prove Theorem~\ref{thm15}. 
In Section~\ref{sec4}, 
we divide virtualized $\Delta$-moves 
into four types, 
and prove that any one of them generates 
the other three.

\section{Virtualized $\Delta$-moves for virtual knots}\label{sec2}

\begin{lemma}\label{lem-cc}
A crossing change at a real crossing 
is realized by a combination of 
a virtualized $\Delta$-move and 
generalized Reidemeister moves. 
\end{lemma}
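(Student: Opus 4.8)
The plan is to reduce the statement to a purely local one and then chase a picture. Fix a real crossing $c$ of a virtual link diagram and let $D$, $D'$ be the two diagrams that agree outside a small disk neighbourhood of $c$ and inside it carry the crossing $c$ with its two over/under choices. Since a $v\Delta$-move and the generalized Reidemeister moves are all local, it is enough to produce, inside a disk neighbourhood of $c$ disjoint from the rest of the diagram, a finite sequence of generalized Reidemeister moves together with a single $v\Delta$-move transforming the picture ``a single crossing $c$'' into the picture ``$c$ switched''; the orientations of the two strands at $c$ play no role, so we may ignore them (or run the same argument for each local orientation).

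First I would \emph{create} a copy of one of the two tangles related in Figure~\ref{virtualized-delta} near $c$: using a Reidemeister~I (equivalently a Reidemeister~II) move one introduces a short auxiliary arc beside $c$, and then, by detour moves, one isotopes the two strands of $c$ together with this auxiliary arc until the three strands are arranged exactly as in that tangle. The extra virtual crossings thereby produced, together with the auxiliary arc, are precisely the ``junk'' that the $v\Delta$-move is allowed to rearrange. Second, I would apply the $v\Delta$-move once, passing to the other tangle of Figure~\ref{virtualized-delta}. Third, I would \emph{undo} the auxiliary data: slide every virtual crossing created by the move along its strand by detour moves and cancel the virtual crossings in pairs by virtual Reidemeister moves, then remove the remaining curl (or bigon) by a Reidemeister~I (or~II) move. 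Reading off the over/under information at the end should show that the only net change is that $c$ has been switched, which is the assertion.

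The hard part will be the explicit picture in the first and third steps: one must choose the handedness and the position of the auxiliary arc so that exactly one of the four types of $v\Delta$-move (see Section~\ref{sec4}) is the one that applies, and so that after the move \emph{every} virtual crossing that appears can genuinely be pushed off and cancelled by generalized Reidemeister moves rather than getting trapped. It is also worth recording why one aims for a single $v\Delta$-move here: a crossing change is a composite of two virtualizations, so one could instead try to realize one virtualization by a $v\Delta$-move plus generalized Reidemeister moves, but that would spend two $v\Delta$-moves; the local construction above is designed to keep the count at one, which is what makes ${\rm u}_{v\Delta}$ behave well in the later sections.
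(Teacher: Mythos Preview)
Your plan is precisely the paper's approach: its entire proof is a four-frame figure carrying out the sequence generalized-Reidemeister $\to$ one $v\Delta$ $\to$ generalized-Reidemeister that you outline (build the three-crossing $\Delta$-tangle near $c$, apply a single $v\Delta$, then cancel the resulting virtual crossings). What you call the ``hard part''---the explicit local picture verifying that the auxiliary crossings really do cancel and leave only the switched $c$---is the whole content of the paper's proof, so your proposal is correct and on the same track; all that remains is to actually draw it.
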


\begin{proof}
This follows from Figure~\ref{pf-lem-cc}, 
where the symbol $\overset{\textrm{R}}{\longleftrightarrow}$ means  
a combination of generalized Reidemeister moves. 
\end{proof}

\begin{figure}[htbp]
  \centering
    \begin{overpic}[width=12cm]{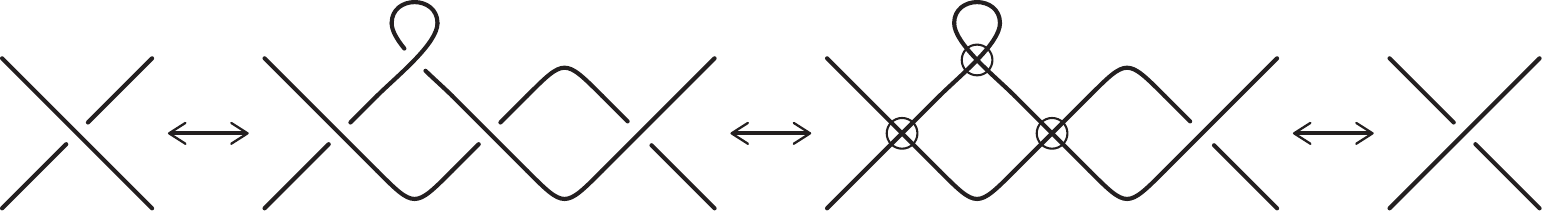}
      \put(42.5,21){R}
      \put(164,21){$v\Delta$}
       \put(291.5,21){R}
    \end{overpic}
  \caption{Proof of Lemma~\ref{lem-cc}}
  \label{pf-lem-cc}
\end{figure}

\begin{lemma}\label{lem-fd}
A local deformation FD as shown in 
{\rm Figure~\ref{forbidden-detour}} is realized by 
a combination of 
two virtualized $\Delta$-moves and 
generalized Reidemeister moves. 
\end{lemma}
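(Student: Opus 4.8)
The plan is to realize the FD move (the ``forbidden detour'' move, in which a strand is pushed through a real crossing but across a virtual crossing on one side) as an explicit sequence of diagram manipulations. First I would record what FD does combinatorially: it replaces a configuration where two parallel strands cross a third strand, with one of the two crossings being real and the other arrangement involving a virtual crossing, by the ``detoured'' configuration. The natural strategy is to introduce a cancelling pair of real crossings by a Reidemeister II move on the strand to be detoured, so that locally we now have a $\Delta$-like pattern of three real crossings sitting next to the prescribed virtual crossing.

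Next I would apply a virtualized $\Delta$-move to that three-crossing cluster; by Definition~\ref{def-vd} this exchanges the cluster of three real crossings for a configuration with the requisite virtual crossing inserted, at the cost of rerouting the strands. After this first $v\Delta$-move the picture should be a diagram that differs from the desired FD output only by another local cluster of real crossings together with some virtual detours that can be slid around using the generalized Reidemeister moves (in particular the mixed moves VI and the virtual Reidemeister moves). I would then perform a second virtualized $\Delta$-move on that remaining cluster, and finally clean up with Reidemeister I and II moves to cancel the auxiliary crossings introduced at the start. The symmetry of the $\Delta$-pattern means the two $v\Delta$-moves are used in ``opposite orientations,'' which is why exactly two are needed rather than one.

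The genuinely delicate step, and the one I expect to be the main obstacle, is bookkeeping the virtual crossings: a single $v\Delta$-move introduces and removes virtual crossings in a specific pattern, and one must verify that after the first application the virtual crossings are positioned so that a second $v\Delta$-move is actually applicable, and that what survives after the second application is \emph{exactly} the FD target up to generalized Reidemeister moves — not merely ``something similar.'' This is really a matter of drawing the intermediate diagrams carefully and tracking each arc, so in the write-up I would present it as a figure (analogous to Figure~\ref{pf-lem-cc}) exhibiting the chain
\[
\cdots \overset{\mathrm{R}}{\longleftrightarrow} \cdots \overset{v\Delta}{\longleftrightarrow} \cdots \overset{\mathrm{R}}{\longleftrightarrow} \cdots \overset{v\Delta}{\longleftrightarrow} \cdots \overset{\mathrm{R}}{\longleftrightarrow} \cdots
\]
and let the reader verify each transition locally. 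A secondary point to be careful about is the placement of the virtual crossing in the FD move relative to the strand being detoured (there are mirror/rotational variants); I would either fix one variant and note that the others follow by the same argument with the figure reflected, or invoke the relevant generalized Reidemeister moves to reduce all variants to a single case before running the argument.
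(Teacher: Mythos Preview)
Your plan is correct and matches the paper's own proof: the paper simply exhibits an explicit chain of local pictures of the form $\mathrm{R}\to v\Delta\to \mathrm{R}\to\cdots\to\mathrm{R}\to v\Delta\to\mathrm{R}$ (with a few intermediate generalized Reidemeister simplifications between the two $v\Delta$-moves), exactly as you outline. The only content is the careful diagram-tracking you anticipate, and the paper presents this purely as a figure with no accompanying text.
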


\begin{figure}[htbp]
  \centering
    \begin{overpic}[width=4.5cm]{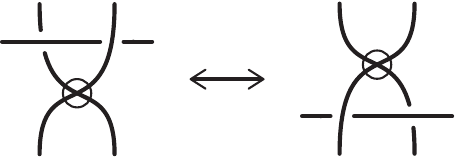}
      \put(57,26.5){FD}
    \end{overpic}
  \caption{A local deformation FD}
  \label{forbidden-detour}
\end{figure}

\begin{proof}
This follows from Figure~\ref{pf-lem-fd}.
\end{proof}

\begin{figure}[htbp]
  \centering
    \begin{overpic}[width=12cm]{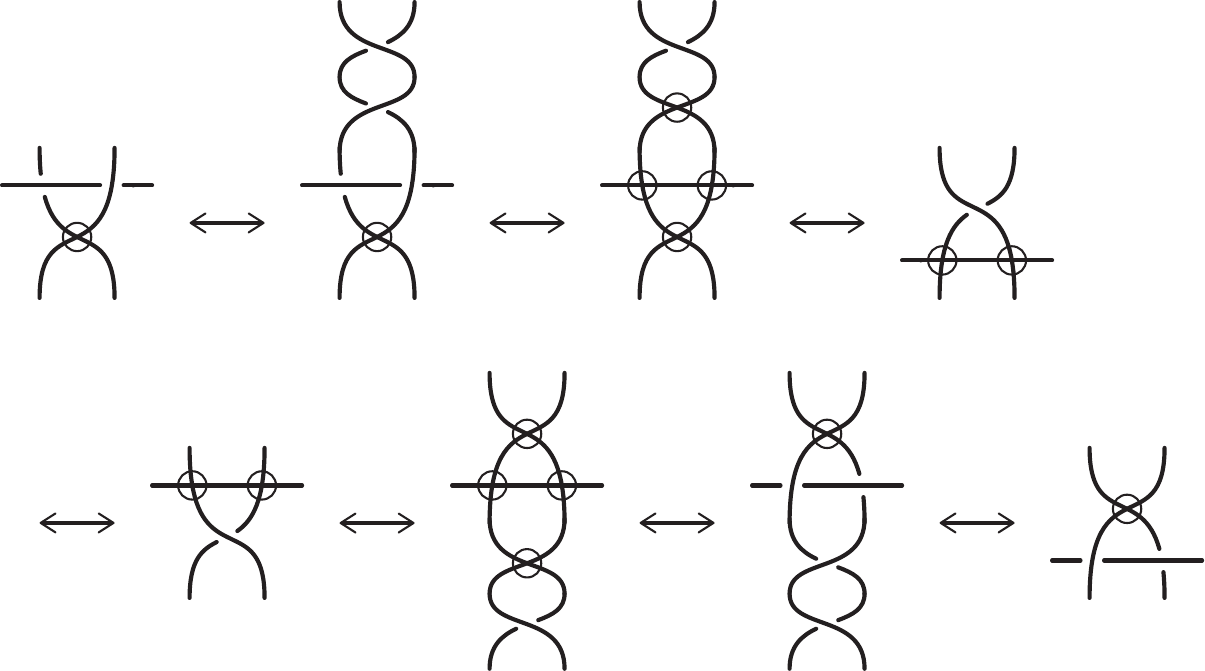}
      \put(60.7,132){R}
      \put(142.7,132){$v\Delta$}
      \put(230.7,132){R}
      \put(18,47){R}
      \put(103,47){R}
      \put(185,47){$v\Delta$}
      \put(273,47){R}
    \end{overpic}
  \caption{Proof of Lemma~\ref{lem-fd}}
  \label{pf-lem-fd}
\end{figure}

\begin{remark} 
The local deformation FD in Figure~\ref{pf-lem-fd} 
is called a {\it forbidden detour move}~\cite{CMG,YI} 
or a {\it fused move}~\cite{ABMW}. 
\end{remark}

\begin{lemma}\label{lem-forbidden}
A forbidden move 
is realized by a combination of 
virtualized $\Delta$-moves and 
generalized Reidemeister moves. 
\end{lemma}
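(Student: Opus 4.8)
The plan is to realize a single forbidden move by one application of the FD move of Lemma~\ref{lem-fd}, finitely many crossing changes (Lemma~\ref{lem-cc}), and generalized Reidemeister moves; chaining these three ingredients then yields the assertion.

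There are two forbidden moves, the forbidden over move and the forbidden under move, each of which is a Reidemeister III–type move involving one real crossing and two virtual crossings. I would first reduce to treating just one of them: performing a crossing change at the unique real crossing involved converts an instance of one forbidden move into an instance of the other (up to generalized Reidemeister moves), and a crossing change is already a combination of virtualized $\Delta$-moves and generalized Reidemeister moves by Lemma~\ref{lem-cc}. So I fix, say, the forbidden over move.

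The core is then a local diagram chase. Starting from the left-hand side of the forbidden over move, I would apply a Reidemeister II move to push a short finger of the strand that is to be relocated across one of the two adjacent strands, creating a canceling pair of real crossings positioned so that, together with the real crossing already present and the intervening virtual crossings, they form precisely the left-hand pattern of the FD move in Figure~\ref{forbidden-detour}. I then apply the FD move of Lemma~\ref{lem-fd}. It remains to recognize the resulting diagram, after generalized Reidemeister moves — Reidemeister II to cancel the auxiliary crossings, and detour moves to slide the virtual crossings into place, possibly with one further crossing change absorbed via Lemma~\ref{lem-cc} — as the right-hand side of the forbidden over move.

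I expect the main obstacle to be the bookkeeping in this chase: the auxiliary crossings introduced by the Reidemeister II move must be inserted with the correct over/under information and on the correct side, so that a single FD application transports the strand past the virtual crossings rather than undoing itself; checking that the over/under data at every crossing matches up at each stage, and pinning down exactly where (if anywhere) an extra crossing change is required, is the delicate point. Once the explicit sequence is exhibited, every move in it is, by Lemmas~\ref{lem-cc} and \ref{lem-fd}, a combination of virtualized $\Delta$-moves and generalized Reidemeister moves, which proves the lemma.
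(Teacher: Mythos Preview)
Your high-level plan---realize a forbidden move by one FD together with crossing changes, then invoke Lemmas~\ref{lem-cc} and~\ref{lem-fd}---is exactly the paper's approach. The paper's sequence is simply
\[
\text{(upper forbidden, left)} \ \xrightarrow{\ \text{cc}\ } \ \text{(FD, left)} \ \xrightarrow{\ \text{FD}\ } \ \text{(FD, right)} \ \xrightarrow{\ \text{cc}\ } \ \text{(upper forbidden, right)},
\]
one crossing change on each side of a single FD; the lower forbidden move is handled symmetrically.

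The gap in your write-up is a misidentification of the local picture. A forbidden move is a Reidemeister~III--type move with \emph{two} real crossings and \emph{one} virtual crossing (in Gauss-diagram language: swap two adjacent arrowheads, or two adjacent arrowtails), not one real and two virtual as you state. The configuration with one real and two virtual crossings is the \emph{allowed} semi-virtual move, already among the generalized Reidemeister moves. Likewise FD (the forbidden detour) also has two real crossings and one virtual crossing, corresponding to swapping an adjacent arrowhead/arrowtail pair. Once you have the correct pictures, the relationship is immediate: the upper forbidden move and FD differ only in the over/under information at \emph{one} of the two real crossings, so a single crossing change converts one local configuration into the other. No auxiliary Reidemeister~II finger, no extra bookkeeping---the whole diagram chase you anticipate collapses to ``change one crossing, apply FD, change one crossing back.'' Your reduction between the two forbidden moves via ``the unique real crossing'' is also off for the same reason; the paper simply treats the lower forbidden move by the analogous sequence rather than reducing it to the upper one.
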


\begin{proof}
The sequence of Figure~\ref{pf-lem-forbidden} shows that 
an upper forbidden move is realized by a combination of 
two crossing changes and a forbidden detour move, 
where the symbol $\overset{\textrm{cc}}{\longleftrightarrow}$ means 
a combination of crossing changes. 
By Lemmas~\ref{lem-cc} and \ref{lem-fd}, 
we have the result for the case of an upper forbidden move. 
The case of a lower forbidden move is proved similarly. 
\end{proof}

\begin{figure}[htbp]
  \centering
    \begin{overpic}[width=10cm]{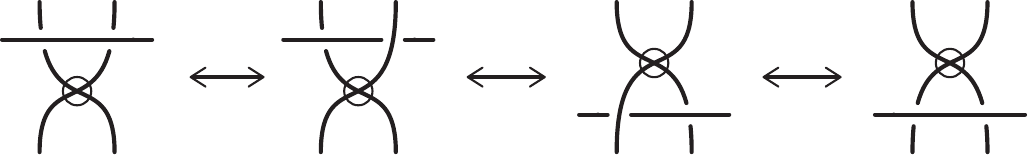}
      \put(58.5,26){cc}
      \put(133.3,26){FD}
      \put(217,26){cc}
    \end{overpic}
  \caption{An upper forbidden move}
  \label{pf-lem-forbidden}
\end{figure}

\begin{proof}[Proof of {\rm Theorem~\ref{thm13}}] 
Since the forbidden move is an unknotting operation 
for virtual knots~\cite{Kan,Nel}, 
we have the conclusion by Lemma~\ref{lem-forbidden}. 
\end{proof}

\begin{remark} 
It is proved in \cite{YI} that 
the forbidden detour move is also an unknotting operation for 
virtual knots. 
By using this result, 
Theorem~\ref{thm13} is  
a direct consequence of Lemma~\ref{lem-fd}. 
\end{remark}

By Theorem~\ref{thm13}, 
any two virtual knots $K$ and $K'$ are $v\Delta$-equivalent to each other. 
We denote by ${\rm d}_{v\Delta}(K,K')$ 
the minimal number of virtualized $\Delta$-moves 
needed to deform a diagram of $K$ into that of $K'$. 
It is called the \textit{$v\Delta$-distance} between $K$ and $K'$. 
In particular, 
we denote ${\rm d}_{v\Delta}(K,O)$ by ${\rm u}_{v\Delta}(K)$, 
and call it the {\it $v\Delta$-unknotting number} of $K$, 
where $O$ is the trivial knot.

We briefly recall the definitions of the $n$-writhe $J_n(K)$ 
and the odd writhe $J(K)$ 
of a virtual knot $K$ from~\cite{ST}. 
Let $G$ be a Gauss diagram of $K$ and $\gamma$ a chord of $G$.  
If $\gamma$ admits a sign $\varepsilon$, 
we assign $\varepsilon$ and $-\varepsilon$ 
to the terminal and initial endpoints of $\gamma$, respectively. 
The endpoints of $\gamma$ divide the underlying oriented circle of $G$ 
into two arcs. 
Let $\alpha$ be the one of the two oriented arcs 
which runs from the initial endpoint of $\gamma$ to the terminal. 
The {\it index} of $\gamma$ 
is the sum of the signs of all the endpoints of chords on $\alpha$, 
and denoted by ${\rm Ind}(\gamma)$. 
For a nonzero integer $n$, the {\it $n$-writhe} $J_{n}(K)$ of $K$ is the sum of the signs 
of all the chords $\gamma$ with ${\rm Ind}(\gamma)=n$, 
and the {\it odd writhe} $J(K)$ of $K$ is defined to be $\sum_{n{\rm :odd}}J_n(K)$.
We remark that $J(K)$ is always even~\cite{Che}.

We give a lower bound for ${\rm d}_{v\Delta}(K,K')$ 
by using the odd writhes of $K$ and $K'$ 
as follows.

\begin{proposition}\label{prop-distance}
Let $K$ and $K'$ be virtual knots. 
Then we have the following. 
\begin{itemize}
\item[{\rm (i)}] 
${\rm d}_{v\Delta}(K,K')\geq \frac{1}{2}|J(K)-J(K')|$. 
\item[{\rm (ii)}] 
${\rm u}_{v\Delta}(K)\geq \frac{1}{2}|J(K)|$. 
\end{itemize}
\end{proposition}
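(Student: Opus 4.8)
The plan is to show that a single virtualized $\Delta$-move changes the odd writhe $J$ by at most $2$; part (i) then follows by the triangle inequality along a shortest sequence of $v\Delta$-moves from $K$ to $K'$, and part (ii) is the special case $K'=O$ together with $J(O)=0$. Since $J$ is known to be invariant under generalized Reidemeister moves, it suffices to analyze how $J$ behaves under one $v\Delta$-move performed inside a fixed diagram, and for this the Gauss-diagram description of $J_n$ is the right tool.

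First I would pass to Gauss diagrams and record exactly what a virtualized $\Delta$-move does combinatorially. Inspecting Figure~\ref{virtualized-delta}, a $v\Delta$-move involves three real crossings, and on the Gauss-diagram level it replaces the three chords present on the left by the three chords present on the right while leaving the rest of the Gauss diagram untouched; moreover the three strands entering the disk reappear on the other side in a controlled way, so the positions of all \emph{other} chord endpoints relative to the affected region are preserved. The key bookkeeping step is to compute, for each of the (at most three) chords created or destroyed, both its sign and its index ${\rm Ind}$, and to check how the indices of the unaffected chords can shift. Because only finitely many endpoints move and they move only within the small disk, the index of any chord disjoint from the disk is unchanged, and the indices of chords with exactly one endpoint in the disk change by a bounded amount coming solely from the signs of the three local chords.

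The heart of the argument is then a finite case check: among the three local chords, the contributions to $\sum_{n\text{ odd}} J_n$ must be shown to have net change of absolute value at most $2$. I expect the cleanest way is to observe that two of the three chords forming the $\Delta$-configuration have indices of the same parity and opposite-signed contributions, or else that the parities of their indices are forced to be even so that they do not contribute to $J$ at all; combined with the fact that an index shift by the sign of a local chord changes the parity class of the affected global chords in a way that pairs them up, the total change collapses to something contributed by at most one chord, hence $|J(K)-J(K')|\le 2$ for a single move. The main obstacle will be handling the bookkeeping of the index shifts of the \emph{unaffected} long chords passing through the disk: one has to verify that these shifts cancel in pairs when summed over odd $n$, using the parity constraint that $J(K)$ is always even (the result of~\cite{Che}) as a consistency check, and being careful that the two distinct local forms of the $v\Delta$-move shown in Figure~\ref{virtualized-delta} (the two arrows) are both covered. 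Once this local analysis is in place, the inequalities in (i) and (ii) are immediate.
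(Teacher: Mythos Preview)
Your outline contains a basic misreading of the move that derails the rest of the argument. A virtualized $\Delta$-move does \emph{not} ``replace the three chords present on the left by the three chords present on the right'': one side of the move has three \emph{real} crossings and the other side has three \emph{virtual} crossings, so at the Gauss-diagram level the move simply deletes (or inserts) three chords and creates none on the other side. Once you see this, almost all of the bookkeeping you worry about disappears. In particular there is no need to track ``index shifts of the unaffected long chords passing through the disk'' and then argue that they cancel in pairs: the six endpoints of the three local chords sit on the underlying circle in three adjacent pairs (this is exactly what the paper's Figure~\ref{pf-prop-distance} records), so for any other chord~$\gamma'$ the arc used to compute ${\rm Ind}(\gamma')$ contains either both endpoints of such a pair or neither. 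Hence the \emph{parity} of ${\rm Ind}(\gamma')$ is unchanged, and the contribution of $\gamma'$ to $J$ is unaffected.

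With that correction the whole question reduces to the three deleted chords themselves, and here your formulation is still too vague. The precise fact one needs is that among those three chords the number with odd index is either $0$ or $2$; this is a standard Gaussian-parity statement (the paper cites \cite{Man,Tur}) and is what forces $J(K)-J(K')\in\{0,\pm2\}$. Your sentence about ``two of the three chords \dots\ have indices of the same parity and opposite-signed contributions, or else \dots\ forced to be even'' is not the right dichotomy and would not yield the bound without further argument. Fix the Gauss-diagram description of the move, invoke the $0$-or-$2$ parity fact for the three local chords, and the proof is two lines; the elaborate cancellation scheme you sketch is neither needed nor, as written, correct.
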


\begin{proof}
Let $G$ and $G'$ be 
Gauss diagrams of $K$ and $K'$, respectively. 
It is sufficient to prove that 
if $G$ and $G'$ are related 
by a virtualized $\Delta$-move, 
then the odd writhes satisfy 
$J(K)-J(K')\in\{0,\pm 2\}$. 

We may assume that $G'$ is obtained from $G$ 
by removing three chords corresponding to 
three real crossings 
involved in a virtualized $\Delta$-move. 
See Figure~\ref{pf-prop-distance}. 
It is known that among these three chords, 
the number of chords with index odd 
is equal to zero or two (cf.~\cite{Man, Tur}). 
Furthermore, the parity of the index of any other chord is preserved  
by the virtualized $\Delta$-move; 
in fact, each pair of the three chords 
has two adjacent endpoints on the underlying circle. 
See the figure again. 
Therefore we have the conclusion. 
\end{proof}

\begin{figure}[htbp]
  \centering
    \begin{overpic}[width=6cm]{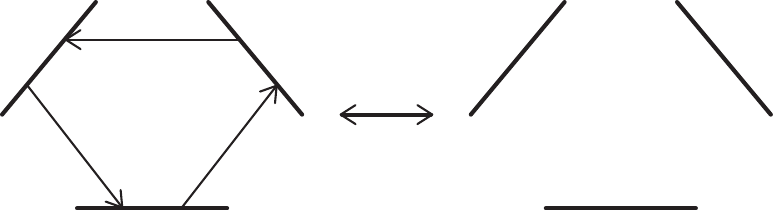}
      \put(79,26){$v\Delta$}
      \put(30,-15){$G$}
      \put(133,-15){$G'$}
    \end{overpic}
   \vspace{1em}
  \caption{A virtualized $\Delta$-move on a Gauss diagram}
  \label{pf-prop-distance}
\end{figure}

It is known that 
the crossing change at a real crossing 
is not an unknotting operation for virtual knots (cf.~\cite{CKS,HK,Tur}).  
If a virtual knot $K$ can be deformed into the trivial knot 
by a finite number of crossing changes, 
then we denote by ${\rm u}(K)$ the minimal number of such 
crossing changes. 
If $K$ cannot be unknotted by crossing changes, 
then we set ${\rm u}(K)=\infty$. 
Then we have the following by Lemma~\ref{lem-cc} immediately.

\begin{lemma}\label{lem-ucc}
Any virtual knot $K$ satisfies 
${\rm u}_{v\Delta}(K)\leq {\rm u}(K)$. 
\hfill$\Box$ 
\end{lemma}

In the following, 
we will construct two families of infinitely many virtual knots $K$ 
with ${\rm u}_{v\Delta}(K)=m$ for any given integer $m\geq 1$, 
which have different properties for ${\rm u}(K)$. 
The following theorems induce Theorem~\ref{thm14} 
immediately. 

\begin{theorem}\label{thm-infinite}
For any integer $m\geq 1$, 
there are infinitely many virtual knots $K$ 
with ${\rm u}_{v\Delta}(K)={\rm u}(K)=m$. 
\end{theorem}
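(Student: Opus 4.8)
\textbf{Proof proposal for Theorem~\ref{thm-infinite}.}

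The plan is to exhibit a single infinite family and verify all three numbers simultaneously. First I would fix, for each $m\geq 1$, a virtual knot $K_m$ built as the connected sum of $m$ copies of a simple building block $T$ whose odd writhe is $J(T)=\pm 2$ and which is unknotted by a single crossing change; the standard ``virtual trefoil'' (the virtual knot with two real crossings and nonzero odd writhe) is the natural candidate. Since the odd writhe and the writhe-polynomial behave additively under connected sum, one gets $J(K_m)=2m$ (after orienting the summands consistently), hence by Proposition~\ref{prop-distance}(ii), ${\rm u}_{v\Delta}(K_m)\geq\frac12|J(K_m)|=m$. On the other hand each block $T$ is unknotted by one crossing change, so ${\rm u}(K_m)\leq m$, and then Lemma~\ref{lem-ucc} gives ${\rm u}_{v\Delta}(K_m)\leq{\rm u}(K_m)\leq m$. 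Chaining these inequalities yields $m\leq{\rm u}_{v\Delta}(K_m)\leq{\rm u}(K_m)\leq m$, so all three quantities equal $m$.

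To upgrade ``one $K_m$'' to ``infinitely many'', I would vary the construction: instead of taking $m$ identical copies of $T$, take connected sums $K=T_{a_1}\#\cdots\#T_{a_m}$ where each $T_a$ is a twisted virtual knot (e.g.\ obtained from $T$ by inserting $a$ extra kinks, or taking a $(2,2a{+}1)$-type virtual torus knot) chosen so that $J(T_a)=\pm 2$ for every $a$ while ${\rm u}(T_a)=1$. Then the same inequality chain shows ${\rm u}_{v\Delta}(K)={\rm u}(K)=m$ for every such $K$, and one must only check that among these connected sums there are infinitely many distinct virtual knots. This last point is where a genuine invariant is needed: I would use a finer invariant than $J$ — for instance the full $n$-writhe sequence $\{J_n\}$ or the writhe polynomial — to distinguish the $T_a$'s, so that different multisets $\{a_1,\dots,a_m\}$ give non-equivalent connected sums.

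The main obstacle is the \emph{upper} bound ${\rm u}(K_m)\leq m$ together with the distinctness: one must produce blocks that are genuinely unknotted by a single classical crossing change (not merely by virtualization) yet remain pairwise distinct as virtual knots, and one must be careful that connected sum of virtual knots is not well-defined in general — it depends on the choice of arc — so the summands should be chosen so that the relevant connected sums are unambiguous, or one should fix a specific diagram. A secondary technical point is confirming additivity of $J$ under the chosen connected-sum convention; this is standard but should be cited or checked. Once the block $T_a$ with ${\rm u}(T_a)=1$, $|J(T_a)|=2$, and an auxiliary distinguishing invariant is in hand, the theorem follows by assembling the inequalities above.
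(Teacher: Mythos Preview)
Your overall strategy---sandwich ${\rm u}_{v\Delta}(K)$ between $\tfrac12|J(K)|$ from Proposition~\ref{prop-distance}(ii) below and ${\rm u}(K)$ from Lemma~\ref{lem-ucc} above---is exactly the paper's. The paper, however, avoids connected sums entirely: it writes down a single explicit one-parameter family $K_s(m)$ (a $2m$-twist region glued to a $(2s{-}1)$-twist region through a virtual crossing), checks directly that $m$ crossing changes in the $2m$-twist block unknot it, computes $J(K_s(m))=-2m$ from the Gauss diagram, and then distinguishes the $K_s(m)$ for different $s$ by the maximal degree of the Jones polynomial. So the lower/upper bound mechanism is identical; only the construction of the witnesses differs.

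There is, though, a genuine gap in your proposal: the virtual trefoil does \emph{not} satisfy ${\rm u}(T)=1$. Its underlying flat virtual knot is nontrivial, and since crossing changes do not alter the flat type, no sequence of crossing changes can unknot it; hence ${\rm u}(T)=\infty$. (Equivalently, for the standard virtual trefoil one has $J_1\ne J_{-1}$, which by \cite[Theorem~1.5]{ST} already forces ${\rm u}=\infty$.) The same obstruction kills the $(2,2a{+}1)$-type virtual torus knots you mention. So the heart of the construction---finding blocks $T_a$ with $|J(T_a)|=2$ \emph{and} ${\rm u}(T_a)=1$---is not addressed, and your stated candidates do not work. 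You correctly flag this as ``the main obstacle,'' but you have not cleared it.

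A secondary point: you note that connected sum of closed virtual knots is ill-defined. The paper's companion Theorem~\ref{thm-infinite2} handles the analogous issue by working with products of \emph{long} virtual knots and invoking additivity of the $n$-writhes there; if you want to salvage the connected-sum approach, that is the clean way to do it. But you would still need a valid block with finite ${\rm u}$, and the simplest such examples are essentially the paper's $K_s(1)$.
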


\begin{proof}
For an integer $s\geq1$, 
we consider a knot diagram and its Gauss diagram 
with $2m+2s-1$ real crossings (or chords) 
as shown in Figure~\ref{pf-thm-infinite}. 
Let $K_{s}(m)$ be the virtual knot presented by this diagram. 

\begin{figure}[htbp]
  \centering
    \begin{overpic}[width=11cm]{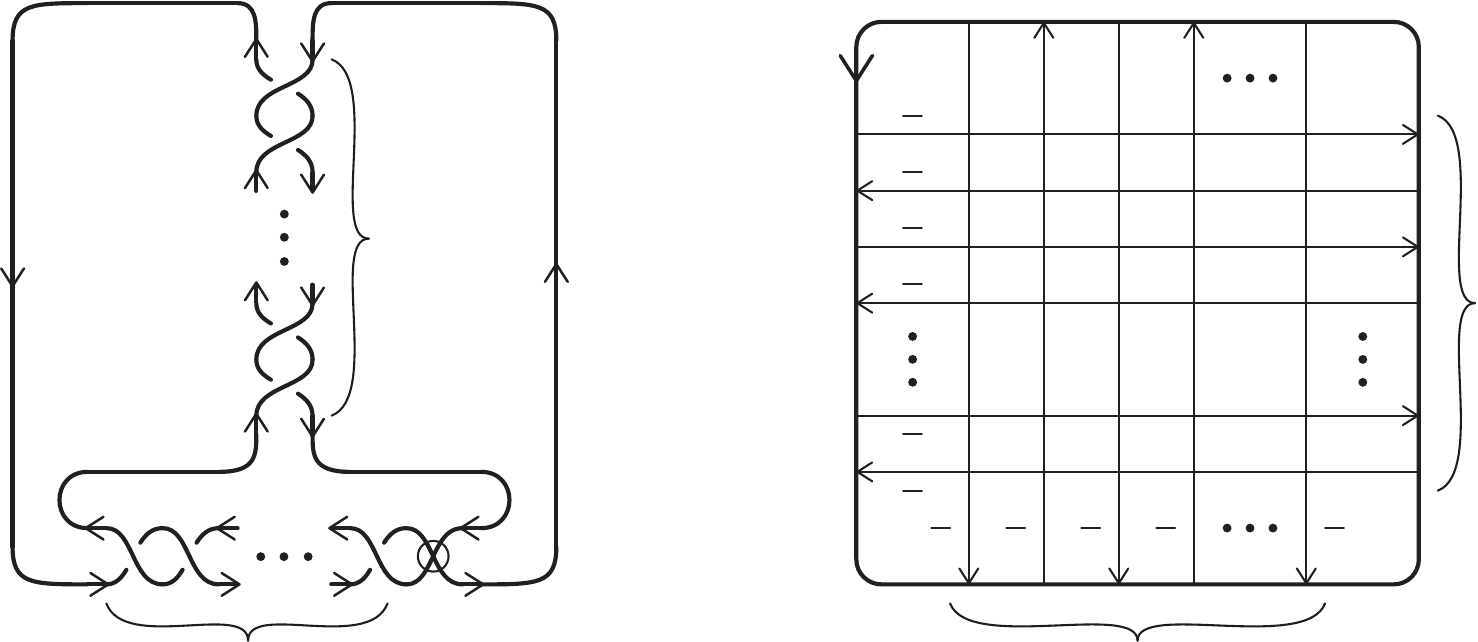}
      \put(84,123){\rotatebox{-90}{$2m$ real crossings}}
      \put(7.7,-13){$2s-1$ real crossings}
      \put(319,96){\rotatebox{-90}{$2m$ chords}}
      \put(212,-13){$2s-1$ chords}
    \end{overpic}
  \vspace{1em}
  \caption{A diagram of $K_{s}(m)$ and its Gauss diagram}
  \label{pf-thm-infinite}
\end{figure}

Since $K_s(m)$ can be unknotted by $m$ crossing changes 
at $m$ real crossings among 
$2m$ half twists in the knot diagram, 
we have ${\rm u}(K_s(m))\leq m$. 
By Lemma~\ref{lem-ucc}, we have 
\[
{\rm u}_{v\Delta}(K_s(m))
\leq {\rm u}(K_s(m))\leq m. 
\]

On the other hand, 
the $2m$ horizontal chords of the Gauss diagram 
have the sign $-1$ and indices $\pm 1$, and 
the $2s-1$ vertical chords have index $0$. 
Since $J(K_s(m))=-2m$ holds, 
we have 
${\rm u}_{v\Delta}(K_s(m))\geq m$ by Proposition~\ref{prop-distance}(ii), 
and hence ${\rm u}_{v\Delta}(K_s(m))={\rm u}(K_s(m))=m$.

Now it is enough to prove that the virtual knots $K_s(m)$'s are mutually distinct. 
By a straightforward calculation, 
the Jones polynomial $f_{K_s(m)}(A)\in{\Z}[A,A^{-1}]$ of $K_s(m)$ 
is given by 
$$f_{K_s(m)}(A)=A^{8m}+(A^{-4}-A^{-8})
\left(\sum_{i=1}^m A^{8i}\right)
\left(-A^{8s-2}+\sum_{j=1}^{2s-1}(-1)^jA^{4j}\right).$$ 
Since the maximal degree of $f_{K_s(m)}(A)$ is equal to 
$8m+8s-6$, 
we have $K_s(m)\ne K_{s'}(m)$ for any $s\ne s'$. 
\end{proof}

\begin{theorem}\label{thm-infinite2}
For any integer $m\geq 1$, 
there are infinitely many virtual knots $K$ 
with ${\rm u}_{v\Delta}(K)=m$ and ${\rm u}(K)=\infty$. 
\end{theorem}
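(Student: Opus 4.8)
The plan is to follow the template of the proof of Theorem~\ref{thm-infinite}: for each $m\ge 1$, produce an explicit infinite family $\{K_s(m)\}_{s\ge 1}$ of virtual knots, given by concrete diagrams and Gauss diagrams, and verify four things. First, $J(K_s(m))=-2m$ (the sign is immaterial), so that Proposition~\ref{prop-distance}(ii) yields ${\rm u}_{v\Delta}(K_s(m))\ge m$. Second, an explicit sequence of exactly $m$ virtualized $\Delta$-moves, together with generalized Reidemeister moves, carries a diagram of $K_s(m)$ to the trivial diagram; this gives ${\rm u}_{v\Delta}(K_s(m))\le m$, hence equality. Third, ${\rm u}(K_s(m))=\infty$. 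Fourth, the $K_s(m)$ are pairwise distinct for distinct $s$; since the construction still carries a free parameter (for instance a twist region of $2s-1$ crossings, as in Theorem~\ref{thm-infinite}), this is a routine Jones polynomial computation along the same lines.

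For the third point I would invoke the standard obstruction to crossing-change unknotting: a crossing change at a real crossing alters only the over/under information (and the sign) at that crossing, hence leaves the underlying flat virtual knot unchanged (cf.~\cite{CKS,HK,Tur}); consequently, if the underlying flat virtual knot of $K$ is nontrivial, then $K$ cannot be carried to $O$ by any finite sequence of crossing changes, i.e.\ ${\rm u}(K)=\infty$. By contrast a virtualized $\Delta$-move can change the underlying flat virtual knot, which is precisely why ${\rm u}_{v\Delta}$ and ${\rm u}$ diverge here; note that the knots $K_s(m)$ of Theorem~\ref{thm-infinite} have \emph{trivial} underlying flat virtual knot, since $m$ crossing changes unknot them. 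Accordingly I would design $K_s(m)$ so that its Gauss diagram contains a rigid block of chords whose flattening is a known nontrivial flat virtual knot — the flat virtual trefoil coming from two interleaved chords is the natural candidate — while choosing the signs of all chords so that the odd-index chords still contribute exactly $J=-2m$.

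The heart of the matter, and the step I expect to be the main obstacle, is making all of these requirements compatible within one explicit family and then verifying the unknotting sequence. The rigid flat block must be robust enough to obstruct every crossing-change sequence, yet the whole diagram must still collapse under only $m$ virtualized $\Delta$-moves; since one virtualized $\Delta$-move reorganizes three real crossings at once, the design should arrange $K_s(m)$ so that $m-1$ of the moves clear the twist regions carrying the bulk of the odd writhe (two odd chords per region, compare the proof of Proposition~\ref{prop-distance}) while the last move simultaneously dissolves the flat block and leaves a diagram reducible by generalized Reidemeister moves. The delicate points are then: confirming, via an explicit figure, that the chosen $m$ virtualized $\Delta$-moves really do trivialize $K_s(m)$; checking that, after the sign assignment, the flattened block remains genuinely nontrivial as a flat virtual knot rather than accidentally reducible; and rechecking $J(K_s(m))=-2m$ together with the Jones polynomial computation, both of which are routine once the family is fixed.
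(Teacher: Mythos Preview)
Your proposal is a plan rather than a proof: you never actually construct the family $K_s(m)$, and you yourself flag the compatibility of the four requirements as ``the main obstacle''. The paper resolves this by a different and cleaner route. Instead of grafting a flat-obstruction block onto the knots of Theorem~\ref{thm-infinite}, it takes $K_s(m)$ to be the closure of the product of $m$ copies of a single long virtual knot $T_s$ with $2s+3$ crossings $a_1,a_2,a_3,b_1,\dots,b_{2s}$, designed so that one virtualized $\Delta$-move on $a_1,a_2,a_3$ unknots $T_s$; this yields ${\rm u}_{v\Delta}(K_s(m))\le m$ at once, with no interaction between the $m$ copies to manage. All the remaining work is done by the $n$-writhes $J_n$, which are additive under long-knot product by \cite[Lemma~4.3]{ST}: one obtains $J_1=J_{-2s-1}=m$, $J_{2s}=-m$, $J_2=2ms$, and $J_n=0$ otherwise. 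Then $J(K_s(m))=2m$ gives the lower bound via Proposition~\ref{prop-distance}(ii); the asymmetry $J_1\ne J_{-1}$ forces ${\rm u}(K_s(m))=\infty$ by \cite[Theorem~1.5]{ST}; and $J_2=2ms$ separates the knots for distinct $s$.

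Two points of contrast are worth noting. First, the product-of-long-knots construction decouples the upper bound from every other requirement and eliminates the ``last move dissolves the flat block'' difficulty you anticipate; in your scheme the single block must simultaneously carry the flat obstruction and be killed by one $v\Delta$-move, which is exactly the compatibility you left open. Second, the paper uses $J_n$-asymmetry rather than a nontrivial flat projection for ${\rm u}=\infty$, and $J_2$ rather than the Jones polynomial to distinguish the family. Since the chord indices and $n$-writhes are already being computed for $J$, both of these come for free, whereas your route would require an additional flat-knot nontriviality argument and a separate Jones computation. Your flat-projection obstruction is valid in principle, but the $n$-writhe criterion from \cite{ST} reads off directly from data you already have in hand.
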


\begin{proof}
For an integer $s\geq 2$, 
we consider a long virtual knot diagram 
and its Gauss diagram with 
$2s+3$ real crossings (or chords) 
$a_1,a_2,a_3$ and $b_1,\dots,b_{2s}$ 
as shown in 
Figure~\ref{fig-infinite-example}. 
Let $T_s$ be the long virtual knot 
presented by this diagram, 
and $K_s(m)$ the virtual knot obtained from 
the closure of the product of $m$ copies of $T_s$. 

Since $T_s$ can be unknotted by 
a single virtualized $\Delta$-move involving the three crossings 
$a_{1}$, $a_{2}$, and $a_{3}$ in the long knot diagram, 
we have ${\rm u}_{v\Delta}(K_s(m))\leq m$.

On the other hand, 
since we have 
$${\rm Ind}(a_1)=1, \ {\rm Ind}(a_2)=2s, \ 
{\rm Ind}(a_3)=-2s-1, \mbox{ and }
{\rm Ind}(b_i)=2 \ (1\leq i\leq 2s),$$
it follows from \cite[Lemma 4.3]{ST} that 
\[
J_{n}(K_{s}(m))=
\begin{cases}
-m & \text{if } n=2s, \\
2ms & \text{if } n=2, \\
m & \text{if } n=1,-2s-1, \\ 
0 & \text{otherwise}.  
\end{cases}
\]
This induces 
$J(K_s(m))=J_1(K_s(m))+J_{-2s-1}(K_s(m))=2m$. 
Therefore we have ${\rm u}_{v\Delta}(K_s(m))\geq m$ 
by Proposition~\ref{prop-distance}(ii), 
and hence ${\rm u}_{v\Delta}(K_s(m))=m$. 
Furthermore, 
since $J_1(K_s(m))=m\ne 0=J_{-1}(K_s(m))$ holds, 
we have ${\rm u}(K_s(m))=\infty$ by \cite[Theorem~1.5]{ST}. 

Since $J_2(K_s(m))=2ms$ holds, 
we have $K_s(m)\ne K_{s'}(m)$ for any $s\ne s'$. 
\end{proof}

\begin{figure}[htbp]
  \centering
    \begin{overpic}[width=12cm]{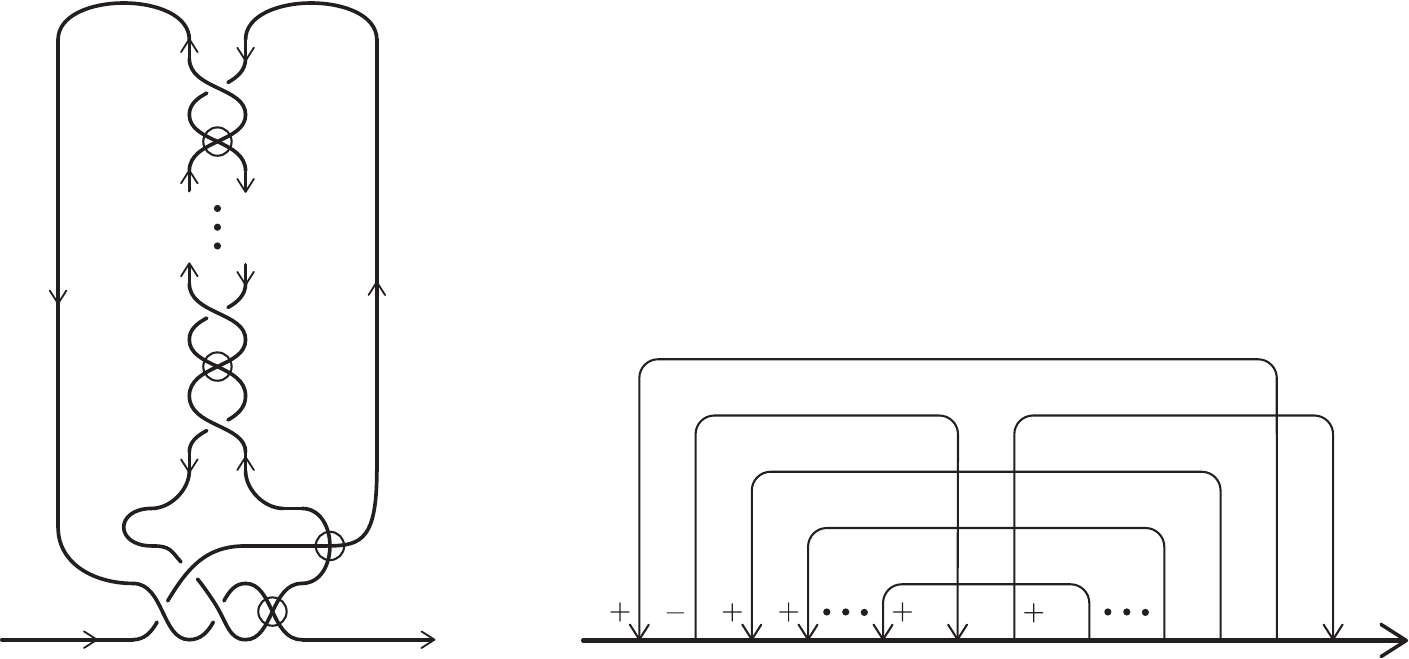}
      \put(33.5,-4){$a_{1}$}
      \put(50,-4){$a_{3}$}
      \put(42,30){$a_{2}$}
      \put(65,135){$b_{1}$}
      \put(65,81){$b_{2s-1}$}
      \put(65,53.5){$b_{2s}$}
      \put(218,77){$a_{1}$}
      \put(218,63.5){$a_{2}$}
      \put(249,63.5){$a_{3}$}
      \put(218,49.5){$b_{1}$}
      \put(218,36){$b_{2}$}
      \put(218,22){$b_{2s}$}
    \end{overpic}
  \vspace{1em}
  \caption{A diagram of $T_s$ and its Gauss diagram}
  \label{fig-infinite-example}
\end{figure}

\section{Virtualized $\Delta$-moves for virtual links}\label{sec3}

\begin{lemma}\label{lem-or}
A local deformation as shown in {\rm Figure~\ref{orientation-reversal}} 
is realized by a combination of 
a virtualized $\Delta$-move and 
generalized Reidemeister moves. 
\end{lemma}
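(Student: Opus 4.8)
The plan is to exhibit an explicit finite sequence of virtual link diagrams connecting the two sides of Figure~\ref{orientation-reversal}, in the same spirit as the proofs of Lemmas~\ref{lem-cc}, \ref{lem-fd}, and \ref{lem-forbidden}; the proof itself would then be presented as the corresponding figure. Reading Figure~\ref{orientation-reversal} as a local tangle replacement in which the orientation of one strand is reversed, the route I would take is: starting from the left-hand tangle, use generalized Reidemeister moves only --- detour moves to reposition the strands, together with a pair (or pairs) of canceling real crossings created by Reidemeister~II --- in order to produce a triangular region bounded by three real crossings that matches, up to the symmetry displayed in Figure~\ref{virtualized-delta}, the left side of the virtualized $\Delta$-move of Definition~\ref{def-vd}; then apply a single virtualized $\Delta$-move, which replaces those three crossings by virtual ones; then remove the three new virtual crossings and cancel the auxiliary real crossings, again by generalized Reidemeister moves, landing on the right-hand tangle.

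That the final clean-up goes through is the structural observation already recorded in the proof of Proposition~\ref{prop-distance} and pictured in Figure~\ref{pf-prop-distance}: each of the three crossings involved in a $v\Delta$-move shares two adjacent endpoints on the strands with each of the other two, so after virtualization the three virtual crossings form a sub-configuration that can be pulled off by detour moves without disturbing the rest of the diagram, and the auxiliary real crossings, having been introduced as canceling Reidemeister~II pairs, then disappear.

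The main obstacle is keeping track of orientations. The virtualized $\Delta$-move of Definition~\ref{def-vd} carries fixed orientation data on its three strands, so the canceling pairs used to build the triangle must be inserted precisely along the segment of the strand whose orientation is to be reversed and so that the reversed portion is exactly the one that participates in the triangle; one then has to verify that after the virtual crossings are removed the strand re-emerges at the correct endpoints with the intended orientation, and that the resulting diagram is precisely the right-hand tangle of Figure~\ref{orientation-reversal}. Once a correct family of intermediate diagrams has been drawn, the verification is routine, because every single step is either one of the listed generalized Reidemeister moves or the one permitted virtualized $\Delta$-move.
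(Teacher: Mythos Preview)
Your proposal is correct and follows essentially the same approach as the paper: the paper's proof is literally ``This follows from Figure~\ref{pf-lem-or},'' a six-frame sequence consisting of three generalized Reidemeister moves, one $v\Delta$-move, and two more generalized Reidemeister moves, which is precisely the kind of explicit diagrammatic sequence you outline. Your description of the mechanism---build a triangle of real crossings with Reidemeister moves, apply a single $v\Delta$, then clear the resulting virtual crossings with detour moves---matches the structure of that figure.
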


\begin{figure}[htbp]
  \centering
    \begin{overpic}[width=6cm]{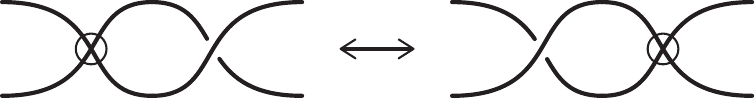}
    \end{overpic}
  \caption{A local deformation considered in Lemma~\ref{lem-or}}
  \label{orientation-reversal}
\end{figure}

\begin{proof}
This follows from Figure~\ref{pf-lem-or}.
\end{proof}

\begin{figure}[htbp]
  \centering
    \begin{overpic}[width=12cm]{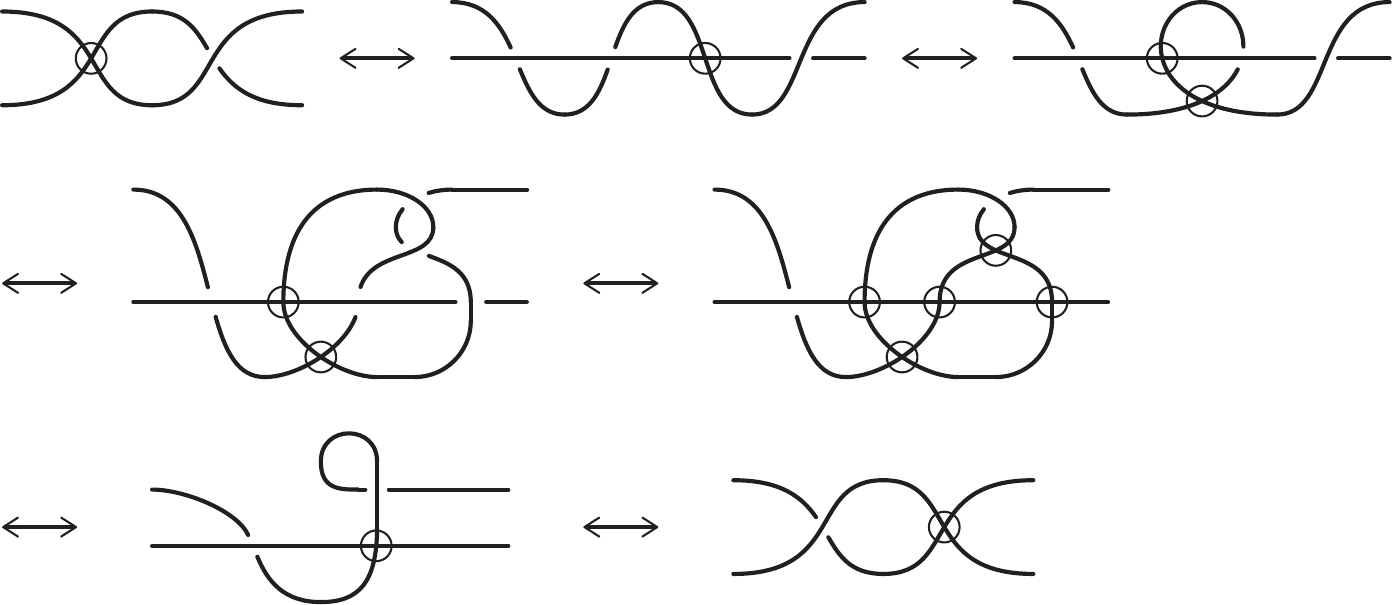}
      \put(89,138){R}
      \put(227,138){R}
      \put(6.3,83){R}
      \put(145.5,84){$v\Delta$}
      \put(6.3,23){R}
      \put(148.8,23){R}
    \end{overpic}
  \caption{Proof of Lemma~\ref{lem-or}}
  \label{pf-lem-or}
\end{figure}

\begin{lemma}\label{lem-reverse}
Let $\gamma$ be a chord of a Gauss diagram $G$. 
\begin{itemize}
\item[{\rm (i)}] 
If a Gauss diagram $G'$ is obtained from $G$ 
by reversing the orientation of $\gamma$, 
then $G$ and $G'$ are related by a finite sequence of 
virtualized $\Delta$-moves and Reidemeister moves. 
\item[{\rm (ii)}] 
If a Gauss diagram $G''$ is obtained from $G$ 
by changing the sign of $\gamma$, 
then $G$ and $G''$ are related by a finite sequence of 
virtualized $\Delta$-moves and Reidemeister moves. 
\end{itemize}
\end{lemma}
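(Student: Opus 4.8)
The plan is to realize each of the two operations on $G$ by a finite sequence of local moves that have already been expressed in terms of virtualized $\Delta$-moves and generalized Reidemeister moves: crossing changes (Lemma~\ref{lem-cc}), forbidden detour moves (Lemma~\ref{lem-fd}), forbidden moves (Lemma~\ref{lem-forbidden}), and the move of Lemma~\ref{lem-or}.

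The first observation is that (i) and (ii) are interchangeable. On a Gauss diagram, a crossing change at the real crossing corresponding to $\gamma$ reverses the arrow of $\gamma$ and changes its sign simultaneously, and alters nothing else. Hence the operation in (i) equals the operation in (ii) followed by such a crossing change, and conversely; since a crossing change is a composition of a virtualized $\Delta$-move and Reidemeister moves by Lemma~\ref{lem-cc}, it suffices to establish one of the two statements and the other follows automatically.

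To treat, say, (i), I would fix a virtual link diagram $D$ presenting $G$ and a disk meeting $D$ only in the two strands through the real crossing $c$ corresponding to $\gamma$. After a few generalized Reidemeister moves to normalise $D$ near $c$, the local replacement inside that disk which reverses the arrow of $\gamma$ on $G$ while preserving its sign should be the move of Figure~\ref{orientation-reversal}; Lemma~\ref{lem-or} then rewrites it as a virtualized $\Delta$-move together with Reidemeister moves, giving the claim. Part (ii) is then obtained by following the construction for (i) with one further crossing change at $c$, via Lemma~\ref{lem-cc}. (If, on reading off the figures, it is instead (ii) that matches the move of Lemma~\ref{lem-or}, one simply reverses the roles of (i) and (ii).)

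The main obstacle is keeping the move genuinely local while tracking signs. Reversing the orientation of a single strand cannot be carried out inside a disk in the naive way, since orientations must be globally consistent along each component; the point of Lemma~\ref{lem-or} is that such a reversal can nonetheless be localised at the cost of introducing virtual crossings. The work is therefore to verify from the figures that performing that move at $c$ affects exactly the chord $\gamma$ on the Gauss diagram — reversing its arrow, leaving its sign unchanged, and leaving the signs and arrows of all other chords alone — so that the net effect is precisely the operation in (i). Confirming that the sign of $\gamma$ is preserved, not flipped, is the delicate point that distinguishes this construction from a plain crossing change.
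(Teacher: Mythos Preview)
Your proposal is correct and matches the paper's proof essentially line for line: part~(i) is exactly Lemma~\ref{lem-or}, and part~(ii) is obtained from~(i) by composing with a single crossing change via Lemma~\ref{lem-cc}. Your hedging about which of (i)/(ii) matches Figure~\ref{orientation-reversal} is unnecessary---it is~(i)---and the references to Lemmas~\ref{lem-fd} and~\ref{lem-forbidden} are not used.
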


\begin{proof}
(i) This follows from Lemma~\ref{lem-or}. 

(ii) Let $G'''$ be a Gauss diagram obtained from $G$ 
by reversing the orientation and changing the sign of $\gamma$. 
By Lemma~\ref{lem-cc}, $G$ and $G'''$ are related 
by a finite sequence of a virtualized $\Delta$-move 
and Reidemeister moves. 
Furthermore $G''$ is obtained from $G'''$ 
by reversing the orientation of $\gamma$. 
Therefore we have the conclusion by (i). 
\end{proof}

\begin{lemma}\label{lem-switch}
Let $\gamma$ and $\gamma'$ be chords of a Gauss diagram $G$ 
such that an endpoint of $\gamma$ is adjacent to 
that of $\gamma'$. 
If a Gauss diagram $G'$ is obtained from $G$ 
by switching the positions of these consecutive endpoints, 
then $G$ and $G'$ are related by a finite sequence of 
virtualized $\Delta$-moves and Reidemeister moves. 
\end{lemma}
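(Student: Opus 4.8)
The plan is to reduce the switching of two consecutive chord endpoints to the moves already established, namely crossing changes (Lemma~\ref{lem-cc}), the forbidden detour move FD (Lemma~\ref{lem-fd}), and the full forbidden move (Lemma~\ref{lem-forbidden}), all of which are realized by virtualized $\Delta$-moves together with Reidemeister moves. Recall that on a Gauss diagram a forbidden move is precisely the operation of exchanging two adjacent endpoints belonging to distinct chords when those two endpoints are \emph{both heads} or \emph{both tails} (the upper/lower forbidden moves). So if the consecutive endpoints of $\gamma$ and $\gamma'$ happen to be of the same type (both initial or both terminal), Lemma~\ref{lem-forbidden} immediately gives the conclusion.

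The remaining case is when the two adjacent endpoints are of opposite type: say the terminal endpoint of $\gamma$ is adjacent to the initial endpoint of $\gamma'$ (the other configuration is symmetric). Here I would first apply Lemma~\ref{lem-reverse}(i) to reverse the orientation of $\gamma'$; this is a sequence of virtualized $\Delta$-moves and Reidemeister moves, and it turns the initial endpoint of $\gamma'$ lying next to the terminal endpoint of $\gamma$ into a terminal endpoint, so now the two adjacent endpoints are both terminal. Next, apply the forbidden move (Lemma~\ref{lem-forbidden}) to switch these two now same-type endpoints. Finally, apply Lemma~\ref{lem-reverse}(i) once more to restore the original orientation of $\gamma'$. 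The net effect on the Gauss diagram is exactly the switch of the original pair of consecutive endpoints, with all intermediate steps realized by virtualized $\Delta$-moves and Reidemeister moves.

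The only subtle point — and the step I expect to require the most care in writing up — is bookkeeping: checking that reversing the orientation of $\gamma'$, performing the forbidden move, and reversing $\gamma'$ back really does leave every other chord endpoint in place and produces precisely the desired transposition of the two specified endpoints, rather than some other local rearrangement. This is a finite, local check on the cyclic order of endpoints near the relevant arc of the underlying circle, and once it is spelled out for the representative configuration, the symmetric configurations follow by the same argument (or by applying Lemma~\ref{lem-reverse}(i) to $\gamma$ instead of $\gamma'$). No genuinely new idea is needed beyond the lemmas already proved.
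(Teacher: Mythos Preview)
Your argument is correct, but it diverges from the paper's in the mixed case. The paper's one-line proof simply cites Lemmas~\ref{lem-fd} and~\ref{lem-forbidden}: on Gauss diagrams the two forbidden moves are exactly the swaps of two adjacent heads and of two adjacent tails, while the forbidden detour move FD is exactly the swap of an adjacent head--tail pair, so together they cover every configuration directly. You instead handle the head--tail case by conjugating a forbidden move with the orientation reversal of Lemma~\ref{lem-reverse}(i): reverse $\gamma'$ to make the two adjacent endpoints the same type, apply Lemma~\ref{lem-forbidden}, then reverse $\gamma'$ back. This is perfectly valid (and there is no circularity, since Lemma~\ref{lem-reverse} does not rely on Lemma~\ref{lem-switch}), but it is a detour: you invoke Lemma~\ref{lem-reverse}(i), which in turn rests on Lemma~\ref{lem-or}, whereas the paper just recognises that Lemma~\ref{lem-fd} already \emph{is} the head--tail swap. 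The payoff of your route is that you never need to identify what FD looks like on Gauss diagrams; the cost is a longer chain of reductions and more virtualized $\Delta$-moves in the realisation. Also note that you list Lemmas~\ref{lem-cc} and~\ref{lem-fd} in your plan but never actually use them in the argument you give.
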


\begin{proof}
This follows from Lemmas~\ref{lem-fd} and \ref{lem-forbidden}. 
\end{proof}

For $n-1$ integers $a_2,\dots,a_n\in\{0,1\}$ with $n\geq 2$, 
let $H(a_2,\dots,a_n)=\bigcup_{i=1}^n H_i$ be the Gauss diagram 
of an $n$-component virtual link 
such that 
\begin{itemize}
\item[(i)] 
$H(a_2,\dots,a_n)$ has no self-chords, 
\item[(ii)] 
there are no nonself-chords between $H_i$ and $H_j$ 
$(2\leq i< j\leq n)$, 
\item[(iii)] 
if $a_i=0$, then there are no nonself-chords 
between $H_1$ and $H_i$, 
\item[(iv)] 
if $a_i=1$, then there is a single nonself-chord 
between $H_1$ and $H_i$ 
which is oriented from $H_1$ to $H_i$ 
with a positive sign, and 
\item[(v)] 
if $a_i=a_j=1$ $(2\leq i<j\leq n)$, 
then we meet the endpoint of the chord between $H_1$ and $H_i$ 
before that between $H_1$ and $H_j$ along $H_1$. 
\end{itemize}
Figure~\ref{ex-H} shows the Gauss diagram $H(1,0,1,1,0)$ with $n=6$. 
Let $M(a_2,\dots,a_n)$ be the $n$-component 
virtual link presented by $H(a_2,\dots,a_n)$. 

\begin{figure}[htbp]
  \centering
    \begin{overpic}[width=7cm]{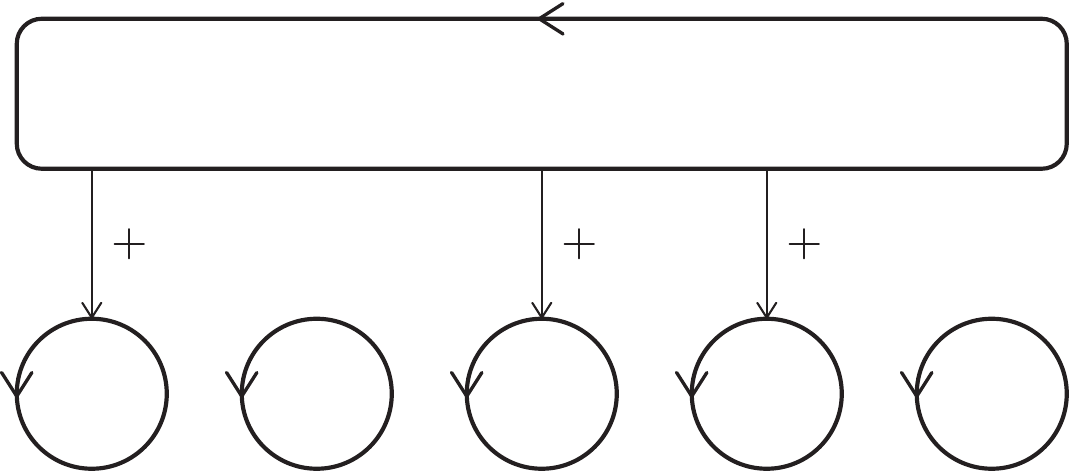}
      \put(-15,67){$H_{1}$}
      \put(10,-13){$H_{2}$}
      \put(52,-13){$H_{3}$}
      \put(94,-13){$H_{4}$}
      \put(136,-13){$H_{5}$}
      \put(178,-13){$H_{6}$}
    \end{overpic}
  \vspace{1em}
  \caption{The Gauss diagram $H(1,0,1,1,0)$}
  \label{ex-H}
\end{figure}

\begin{proposition}\label{prop-form}
Let $n\geq2$ be an integer. 
Any $n$-component virtual link $L=\bigcup_{i=1}^n K_i$ 
is $v\Delta$-equivalent to 
$M(a_2,\dots,a_n)$ for some $a_i$'s. 
\end{proposition}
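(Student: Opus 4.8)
The plan is to reduce an arbitrary Gauss diagram $G$ of $L$ to the normal form $H(a_2,\dots,a_n)$ by a sequence of moves, each of which is realized by virtualized $\Delta$-moves and generalized Reidemeister moves thanks to the lemmas already proved. First I would use Lemma~\ref{lem-cc} (via its Gauss-diagram incarnation, Lemma~\ref{lem-reverse}) together with the unknotting statement for virtual knots, Theorem~\ref{thm13}, to clean up all self-chords: each component $K_i$, regarded on its own, is a virtual knot, and a virtualized $\Delta$-move supported in a small disk meeting only one component does not affect the other components or the nonself-chords between them; hence I may assume $G$ has no self-chords. This establishes condition (i) of the definition of $H(a_2,\dots,a_n)$.

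Next I would deal with the nonself-chords between $H_i$ and $H_j$ for $2\le i<j\le n$, aiming at condition (ii). Using Lemma~\ref{lem-reverse}(i) I can orient every such chord from $H_i$ to $H_j$, and using Lemma~\ref{lem-reverse}(ii) I can make all of them positive. Now I claim a pair of parallel chords between $H_i$ and $H_j$ (same orientation, same sign, and — after applying Lemma~\ref{lem-switch} to slide endpoints along each circle — with their two feet on $H_i$ adjacent and likewise on $H_j$) forms a configuration that can be removed by a Reidemeister~II move, possibly preceded by the forbidden/switch moves of Lemma~\ref{lem-switch} to bring it into cancelling position. Iterating, the number of chords between $H_i$ and $H_j$ can be reduced modulo $2$; a single leftover chord between $H_i$ and $H_j$ with $i,j\ge 2$ can then be slid, via Lemma~\ref{lem-switch}, so that one of its endpoints becomes adjacent to an endpoint of a chord incident to $H_1$, after which a change-of-sign together with a crossing-change-type argument lets me trade it for chords incident to $H_1$; more cleanly, one shows directly that any chord between two non-first components can be eliminated using Lemma~\ref{lem-switch} to juxtapose it with a chord meeting $H_1$ and then applying the forbidden move of Lemma~\ref{lem-forbidden}. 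Either way, after this step all nonself-chords are between $H_1$ and some $H_i$.

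It remains to normalize the chords between $H_1$ and $H_i$ for each fixed $i\ge 2$. By Lemma~\ref{lem-reverse} I orient all of them from $H_1$ to $H_i$ and make them positive; by Lemma~\ref{lem-switch} I may gather their endpoints on $H_i$ into a consecutive block and likewise on $H_1$; then parallel cancelling pairs are removed by Reidemeister~II as above, so the number of chords between $H_1$ and $H_i$ is reduced to $0$ or $1$, which defines $a_i\in\{0,1\}$. Finally, using Lemma~\ref{lem-switch} once more I permute the (at most one per $i$) endpoints along $H_1$ so that, whenever $a_i=a_j=1$ with $i<j$, the foot of the $H_1$--$H_i$ chord is met before that of the $H_1$--$H_j$ chord, giving condition (v). At each stage every elementary move — orientation reversal of a chord, sign change of a chord, transposition of two adjacent endpoints, and Reidemeister~II cancellation of a parallel pair — is realized by virtualized $\Delta$-moves and generalized Reidemeister moves by Lemmas~\ref{lem-cc}, \ref{lem-reverse}, \ref{lem-switch} (and the Reidemeister moves are allowed outright), so the resulting diagram $H(a_2,\dots,a_n)$ is $v\Delta$-equivalent to the original, proving $L$ is $v\Delta$-equivalent to $M(a_2,\dots,a_n)$.

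The main obstacle I expect is the second step: showing that nonself-chords between two components $H_i,H_j$ with $i,j\ge 2$, and more generally any surplus of chords, can actually be cancelled in pairs. The subtlety is that switching endpoints (Lemma~\ref{lem-switch}) only moves one endpoint past \emph{one} neighbouring endpoint at a time, so I must argue carefully that I can always transport a chosen pair of chords into genuinely cancelling position (nested/parallel with no endpoints in between on either arc) without creating obstructions; and I must make sure that reducing chords between $H_i$ and $H_j$ does not reintroduce self-chords or disturb the count between other pairs. The cleanest route is probably to fix a cyclic order on the components and process pairs $(i,j)$ in a suitable order, using the freedom of sign change (Lemma~\ref{lem-reverse}(ii)) to convert a pair of ``opposite'' chords into a cancelling ``parallel'' pair, and to handle chords between two non-first components by dragging one foot next to an $H_1$-chord foot and invoking the forbidden move of Lemma~\ref{lem-forbidden}, which effectively lets a chord endpoint jump over another chord's endpoint on the other strand — exactly the mechanism needed to re-route an $H_i$--$H_j$ chord through $H_1$.
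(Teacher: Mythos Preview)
Your step~2 is where the argument actually breaks down. Both mechanisms you sketch for handling a surviving chord between $H_i$ and $H_j$ (with $i,j\ge 2$) presuppose that some chord touching $H_1$ is already present to interact with; but nothing guarantees this --- take for instance a link whose only real crossing is between $K_2$ and $K_3$. Moreover, the forbidden move of Lemma~\ref{lem-forbidden} only slides a chord endpoint past an adjacent endpoint on the \emph{same} underlying circle; it does not transport an endpoint from one component's circle to another, so it cannot by itself ``re-route an $H_i$--$H_j$ chord through $H_1$'' as your final paragraph suggests.

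The paper fills this gap with a direct one-step construction that you are missing: given a real crossing between $K_i$ and $K_j$, use generalized Reidemeister moves to bring an arc of $K_1$ into the picture, and then apply a single $v\Delta$-move; the net effect on the Gauss diagram is precisely to replace the one $H_i$--$H_j$ chord by a pair of chords, one between $H_1$ and $H_i$ and one between $H_1$ and $H_j$. No pre-existing $H_1$-chord is required. With this in hand the rest of the normalisation goes through essentially as in your steps~3 and~4, subject to two small corrections: a Reidemeister~II move cancels a pair of parallel chords of \emph{opposite} signs, so ``make all of them positive'' should instead read ``arrange a chosen pair to have opposite signs before cancelling''; and your step~1 appeal to Theorem~\ref{thm13} is not quite legitimate as written, since an unknotting sequence for $K_i$ as a stand-alone virtual knot need not be realisable in the link diagram without disturbing the other components --- the paper instead uses Lemma~\ref{lem-switch} to slide the two endpoints of each self-chord together and then removes it with a Reidemeister~I move.
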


\begin{proof}
Let $G=\bigcup_{i=1}^n G_i$ be a Gauss diagram of $L$. 
By using Lemma~\ref{lem-switch} and 
Reidemeister moves I, 
we can remove all the self-chords from $G$. 
Therefore we may assume that $G$ has 
no self-chords 
(up to virtualized $\Delta$-moves and Reidemeister moves). 

If there is a nonself-chord 
between $G_i$ and $G_j$ $(2\leq i< j \leq n)$, 
then we can replace it with 
a pair of nonself-chords 
one of which connects between $G_1$ and $G_i$, 
and the other between $G_1$ and $G_j$. 
In fact, this is achieved by observing a sequence of link diagrams 
as shown in Figure~\ref{pf-prop-form}. 
Therefore we may also assume that 
there are no nonself-chords 
between $G_i$ and $G_j$ $(2\leq i< j\leq n)$. 

\begin{figure}[htbp]
  \centering
  \vspace{1em}
    \begin{overpic}[width=8cm]{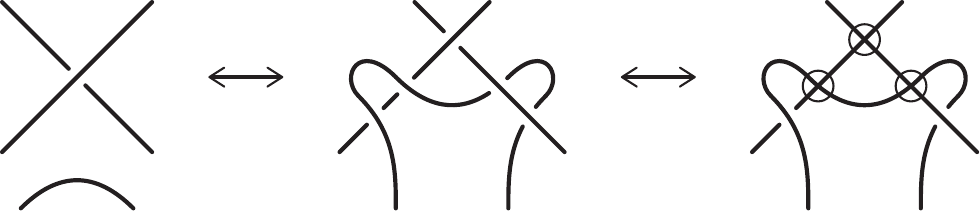}
      \put(-12,53){$K_{j}$}
      \put(33,53){$K_{i}$}
      \put(13,-13){$K_{1}$}
      \put(54,35.5){R}
      \put(84,53){$K_{j}$}
      \put(112,53){$K_{i}$}
      \put(100,-13){$K_{1}$}
      \put(147,35.5){$v\Delta$}
      \put(179,53){$K_{j}$}
      \put(208,53){$K_{i}$}
      \put(196,-13){$K_{1}$}
    \end{overpic}
  \vspace{1em}
  \caption{A sequence of link diagrams}
  \label{pf-prop-form}
\end{figure}

By using Lemmas~\ref{lem-reverse}, \ref{lem-switch}, 
and Reidemeister moves II, 
we can reduce the number of nonself-chords 
between $G_1$ and $G_i$ $(2\leq i\leq n)$ 
to zero or one. 

Finally, any Gauss diagram $G$ of $L$ 
can be deformed into $H(a_2,\dots,a_n)$ for some $a_i$'s 
by Lemmas~\ref{lem-reverse} and \ref{lem-switch}. 
\end{proof}

For $n\geq 2$, 
let $L=K_1\cup\dots\cup K_n$ be an $n$-component virtual link, 
and $G=G_1\cup\dots\cup G_n$ a Gauss diagram of $L$. 
For each $i=1,\dots,n$, 
the {\it $i$th parity} of $L$ is 
defined to be the parity of the number of 
endpoints of nonself-chords on $G_i$, 
and denoted by $p_i(L)\in\{0,1\}$. 
Since a self-chord has two endpoints on the same component, 
$p_i(L)$ is coincident with the parity of 
the number of endpoints of 
self-/nonself-chords on $G_i$.

\begin{lemma}\label{lem-parity}
For any $i=1,\dots,n$, 
the $i$th parity $p_i(L)$ is an invariant of 
the $v\Delta$-equivalence class of $L$. 
\end{lemma}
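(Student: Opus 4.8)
The plan is to show that $p_i(L)$ does not change under either a generalized Reidemeister move or a virtualized $\Delta$-move, working at the level of Gauss diagrams. Since $p_i(L)$ is, by definition, the parity of the number of endpoints of nonself-chords lying on $G_i$ (equivalently, of all chord-endpoints on $G_i$, as noted in the paragraph preceding the lemma), it suffices to track how these counts change under each local modification of the Gauss diagram.

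First I would handle the generalized Reidemeister moves. A Reidemeister I move adds or removes a single self-chord, which changes the number of self-chord endpoints on one component by $2$, hence preserves every $p_i$. A Reidemeister II move adds or removes two chords with the same pair of endpoint-components; if both chords are self-chords on $G_i$ this changes the count on $G_i$ by $4$, and if both are nonself-chords between $G_i$ and $G_j$ it changes the count on each of $G_i$ and $G_j$ by $2$; either way all parities are preserved. A Reidemeister III move and the purely virtual generalized Reidemeister moves (involving virtual crossings) do not alter the set of chords at all, only their cyclic positions, so they trivially preserve each $p_i$.

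Next I would treat the virtualized $\Delta$-move. By the description in the proof of Proposition~\ref{prop-distance} (see Figure~\ref{pf-prop-distance}), on a Gauss diagram a virtualized $\Delta$-move inserts or deletes exactly three chords, and the configuration is such that each pair among these three chords has two adjacent endpoints on the underlying circle. The key combinatorial point is to determine, for each component $G_i$, the parity of the number of endpoints (among the six endpoints of these three chords) that lie on $G_i$. Inspecting the local picture, the three chords pair up six endpoints into three ``blocks'' of two adjacent endpoints each, with one endpoint of each block belonging to one of the three chords and the other to another; consequently each of the three strands passing through the move contributes an even number of endpoints to any single component it belongs to. Thus for every $i$ the count of chord-endpoints on $G_i$ changes by an even number, and $p_i(L)$ is unchanged.

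The main obstacle is the bookkeeping in the last step: one must be careful about the case where two or three of the strands involved in the virtualized $\Delta$-move belong to the same link component, since then the six endpoints can be distributed among fewer components and one must verify the total on that component is still even. This follows because the six endpoints decompose into the three adjacent ``blocks'', each block lying on a single strand and hence on a single component, so each component receives a number of endpoints that is a sum of contributions of size $2$ — always even. Once this is checked, combining with the Reidemeister case gives that $p_i$ is invariant under $v\Delta$-equivalence, completing the proof.
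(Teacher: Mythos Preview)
Your proof is correct and follows essentially the same approach as the paper's: both argue that the chords involved in any Reidemeister move or virtualized $\Delta$-move contribute an even number of endpoints to each component $G_i$, so $p_i$ is preserved. The paper condenses this into a single sentence, whereas you spell out each case explicitly (including the ``blocks'' analysis for the $v\Delta$-move and the multi-strand-in-one-component subtlety), but the underlying idea is identical.
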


\begin{proof}
Since the chord(s) involved in a Reidemeister move 
or a virtualized $\Delta$-move 
have an even number of endpoints on each $G_i$, 
this move preserves $p_i(L)$. 
\end{proof}

\begin{remark}
For $n\geq 2$, any $n$-component virtual link $L$ satisfies 
$$p_1(L)+p_2(L)+\dots+p_n(L)\equiv 0 \ ({\rm mod}~2).$$
In fact, the number of endpoints of 
all self-/nonself-chords is even. 
\end{remark}

\begin{proof}[Proof of {\rm Theorem~\ref{thm15}}] 
The only if part follows from Lemma~\ref{lem-parity}. 

We will prove the if part. 
By Proposition~\ref{prop-form}, 
$L$ and $L'$ are $v\Delta$-equivalent to 
$M(a_2,\dots,a_n)$ and $M(b_2,\dots,b_n)$ for some 
$a_i$'s and $b_i$'s, respectively. 
By assumption and Lemma~\ref{lem-parity}, 
it holds that 
$$a_i=p_i\bigl(M(a_2,\dots,a_n)\bigr)=p_i(L)=p_i(L')
=p_i\bigl(M(b_2,\dots,b_n)\bigr)=b_i$$ 
for any $i=1,\dots,n$. 
Therefore $L$ and $L'$ are $v\Delta$-equivalent to 
$M(a_2,\dots,a_n)=M(b_2,\dots,b_n)$. 
\end{proof}

The following is an immediate consequence of 
the proof of Theorem~\ref{thm15}. 

\begin{corollary}
For $n\geq 2$, 
a complete representative system of 
the $v\Delta$-equivalence classes 
of $n$-component virtual links is given by 
$$\left\{M(a_2,\dots,a_n)\mid a_2,\dots,a_n\in\{0,1\}\right\}.$$
In particular, 
the number of $v\Delta$-equivalence classes 
is equal to $2^{n-1}$. 
\hfill$\Box$
\end{corollary}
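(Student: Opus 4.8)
The plan is to extract everything from Proposition~\ref{prop-form} and the proof of Theorem~\ref{thm15}, so that only a short bookkeeping argument remains. First I would observe that Proposition~\ref{prop-form} already shows that every $n$-component virtual link is $v\Delta$-equivalent to some $M(a_2,\dots,a_n)$; hence the indicated family of virtual links meets every $v\Delta$-equivalence class, and it remains only to see that no two members of the family with distinct tuples lie in the same class.

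For the distinctness, the key step is to compute the $i$th parities of the model links directly from the defining properties (i)--(v) of the Gauss diagram $H(a_2,\dots,a_n)$. Since $H(a_2,\dots,a_n)$ has no self-chords and, for each $i\geq 2$, exactly $a_i$ endpoints of nonself-chords lie on the component $H_i$, one gets $p_i\bigl(M(a_2,\dots,a_n)\bigr)=a_i$ for $i=2,\dots,n$ (and $p_1$ is then forced by the Remark preceding the proof of Theorem~\ref{thm15}). Combining this with Lemma~\ref{lem-parity}, equivalently with the ``only if'' part of Theorem~\ref{thm15}, if $M(a_2,\dots,a_n)$ and $M(b_2,\dots,b_n)$ are $v\Delta$-equivalent then $a_i=p_i\bigl(M(a_2,\dots,a_n)\bigr)=p_i\bigl(M(b_2,\dots,b_n)\bigr)=b_i$ for every $i$. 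Thus distinct tuples $(a_2,\dots,a_n)\in\{0,1\}^{n-1}$ give pairwise non-$v\Delta$-equivalent links.

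Putting the two steps together, the assignment $(a_2,\dots,a_n)\mapsto [\,M(a_2,\dots,a_n)\,]$ is a well-defined bijection from $\{0,1\}^{n-1}$ onto the set of $v\Delta$-equivalence classes of $n$-component virtual links: surjectivity is Proposition~\ref{prop-form} and injectivity is the parity computation above. In particular the family $\{M(a_2,\dots,a_n)\mid a_2,\dots,a_n\in\{0,1\}\}$ is a complete representative system, and the number of $v\Delta$-equivalence classes equals $|\{0,1\}^{n-1}|=2^{n-1}$.

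I do not expect a genuine obstacle here: both halves of the argument are already present in the proof of Theorem~\ref{thm15}, and the only new point is the elementary verification that the $i$th parity of $M(a_2,\dots,a_n)$ is exactly $a_i$, which is immediate from how $H(a_2,\dots,a_n)$ was built. The one thing to be careful about is the bookkeeping that combines the two implications — ``the family covers every class'' versus ``the family separates the classes'' — into the single bijection statement, rather than recording only one of the two.
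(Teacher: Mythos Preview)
Your argument is correct and matches the paper's approach: the corollary is stated there as an immediate consequence of the proof of Theorem~\ref{thm15}, and your write-up simply unpacks that remark by citing Proposition~\ref{prop-form} for surjectivity and the parity identity $p_i(M(a_2,\dots,a_n))=a_i$ together with Lemma~\ref{lem-parity} for injectivity. Nothing further is needed.
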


\section{Relations among four types of virtualized $\Delta$-moves}\label{sec4}

We can divide virtualized $\Delta$-moves into 
four types $rv\Delta$, $vr\Delta$, $rv\Delta'$, and $vr\Delta'$ 
as shown in Figure~\ref{4types}. 
Here, 
the $rv\Delta$- and $rv\Delta'$-moves 
replace three real crossings with virtual ones, 
and the $vr\Delta$- and $vr\Delta'$-moves 
replace three virtual crossings with real ones. 

\begin{figure}[htbp]
  \centering
    \begin{overpic}[width=8cm]{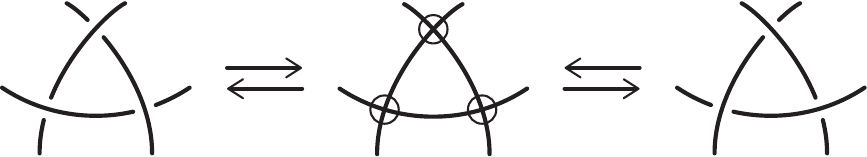}
      \put(60.5,29){$rv\Delta$}
      \put(60.5,5.5){$vr\Delta$}
      \put(149,29){$rv\Delta'$}
      \put(149,5.5){$vr\Delta'$}
    \end{overpic}
  \caption{Four types $rv\Delta$, $vr\Delta$, $rv\Delta'$, and $vr\Delta'$}
  \label{4types}
\end{figure}

\begin{lemma}\label{lem-ccc}
For any $X\in\{rv\Delta, vr\Delta, rv\Delta', vr\Delta'\}$, 
a crossing change at a real crossing is realized by a combination of 
an $X$-move and generalized Reidemeister moves. 
\end{lemma}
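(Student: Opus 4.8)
The plan is to adapt the proof of Lemma~\ref{lem-cc}, where the crossing change was realized using a single virtualized $\Delta$-move (that is, a move of type $rv\Delta$, say). Since Figure~\ref{pf-lem-cc} already establishes the case of one of the four types, the task reduces to showing that the remaining three types can each be used to produce a crossing change. I would proceed by a two-step reduction: first observe that any one of the four types, together with generalized Reidemeister moves, yields the local deformation FD of Lemma~\ref{lem-fd} (the forbidden detour move) and an orientation-change move as in Lemma~\ref{lem-or}; then reassemble a crossing change from these building blocks.

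The key steps, in order, are as follows. First I would establish the analogues of Lemmas~\ref{lem-fd}, \ref{lem-or}, and \ref{lem-forbidden} for each fixed type $X$: inspecting the local pictures, a single $X$-move combined with Reidemeister moves realizes the forbidden detour move FD, and hence (via the argument of Lemma~\ref{lem-forbidden}, which only used crossing changes and FD) a forbidden move as well. Here one must be slightly careful: Lemma~\ref{lem-forbidden} derived the forbidden move from FD \emph{plus} crossing changes, so to avoid circularity I would instead argue directly that each $X$-move plus Reidemeister moves realizes a forbidden move, for instance by modifying Figure~\ref{pf-lem-fd} so that the strands are decorated with the appropriate over/under or virtual data matching the chosen type $X$. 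Second, once a forbidden move is available for type $X$, I would recover the crossing change: it is classical in virtual knot theory that a crossing change is obtained by combining an upper forbidden move and a lower forbidden move (this is the reverse direction of the well-known fact that the pair of forbidden moves generates the crossing change, used implicitly throughout the subject), and both an upper and a lower forbidden move follow from the type-$X$ move by symmetry. Combining these gives the crossing change from the $X$-move alone.

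The main obstacle I anticipate is the case analysis in the first step: for the types $vr\Delta$ and $vr\Delta'$, which replace three \emph{virtual} crossings with real ones, the local diagram looks rather different from the $rv\Delta$ picture used in Figure~\ref{pf-lem-cc}, and one has to verify by hand that after inserting and deleting virtual crossings via detour moves (generalized Reidemeister move VI) the configuration needed for the FD-move or the forbidden move really appears. In practice this amounts to drawing three or four figures analogous to Figures~\ref{pf-lem-cc} and \ref{pf-lem-fd}, one for each type, and checking that the over/under information is consistent throughout; there is no conceptual difficulty, only the bookkeeping of keeping track of which crossings are real and which are virtual at each stage. A clean way to organize this is to prove the single statement ``each $X$-move, plus generalized Reidemeister moves, realizes a forbidden move,'' from which Lemma~\ref{lem-ccc} follows uniformly by the crossing-change-from-forbidden-moves argument, so that only one family of figures (indexed by the four types) needs to be drawn.

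Finally, I would remark that once Lemma~\ref{lem-ccc} is in hand, the stronger assertion that each of the four types generates the other three (the stated goal of Section~\ref{sec4}) follows quickly: given a move of one type $X$, use $X$ to perform crossing changes (Lemma~\ref{lem-ccc}) and FD-moves and forbidden moves as above, and then realize any other type $Y$ by the same kind of local figure that expresses $Y$ in terms of Reidemeister moves, crossing changes, and one application of $X$. So the structure of the section is: prove Lemma~\ref{lem-ccc} by the forbidden-move route sketched here, then bootstrap to the full equivalence of the four types.
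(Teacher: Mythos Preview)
Your route works in principle but is a substantial detour, and it misses the two elementary symmetries that make the paper's proof a one-liner. The paper observes that the sequence in Figure~\ref{pf-lem-cc} already uses exactly one $rv\Delta$-move; reading the \emph{same} sequence backwards turns that single move into a $vr\Delta$-move, handling the second case with no new pictures at all. For the primed types, one simply changes every real crossing in those two sequences (a global mirror), which converts the lone $rv\Delta$- or $vr\Delta$-move into an $rv\Delta'$- or $vr\Delta'$-move while leaving each generalized Reidemeister move a generalized Reidemeister move. So all four cases follow from Figure~\ref{pf-lem-cc} by the two symmetries ``reverse the sequence'' and ``mirror all crossings,'' with no appeal to FD, forbidden moves, or any new figures.

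By contrast, your plan routes through forbidden moves, which creates exactly the circularity you flag: the paper's derivation of forbidden moves (Lemma~\ref{lem-forbidden}) already consumes crossing changes, so you would need an independent type-$X$ derivation of forbidden moves for each $X$, and you do not actually supply one beyond saying figures could be drawn. Even granting that, you would then need the converse implication ``forbidden moves $\Rightarrow$ crossing change,'' which is true but not the triviality you suggest, and the resulting sequence would use several $X$-moves rather than a single one. Since the lemma is stated (and later used) with \emph{one} $X$-move, your argument would prove a weaker statement than what is claimed. The fix is simply to exploit the reverse/mirror symmetries of Figure~\ref{pf-lem-cc} directly.
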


\begin{proof}
For $X=rv\Delta$, 
a crossing change at a real crossing is obtained from the deformation sequence in Figure~\ref{pf-lem-cc} 
by replacing $\stackrel{v\Delta}{\longleftrightarrow}$ 
with $\stackrel{rv\Delta}{\longrightarrow}$. 
For $X=vr\Delta$, 
we may follow this sequence in reverse. 
For $X=rv\Delta'$ and $vr\Delta'$, 
we may perform the crossing change at every real crossing 
in the above sequences for $rv\Delta$ and $vr\Delta$, respectively. 
\end{proof}

\begin{lemma}\label{lem-sr}
A local deformation SR 
as shown in {\rm Figure~\ref{sign-reversal}}
is realized by a combination of $rv\Delta$-moves and 
generalized Reidemeister moves. 
\end{lemma}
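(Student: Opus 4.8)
The plan is to prove Lemma~\ref{lem-sr} in the same way as the earlier realization lemmas, by producing an explicit deformation sequence (to be displayed as a figure in the style of Figure~\ref{pf-lem-cc}) in which every virtualized $\Delta$-move that occurs is of type $rv\Delta$. The single non-Reidemeister ingredient I intend to reuse as a black box is the crossing change at a real crossing: by Lemma~\ref{lem-ccc} with $X=rv\Delta$ it is already a combination of $rv\Delta$-moves and generalized Reidemeister moves, so it may be inserted freely in such a sequence.

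Reading the SR move of Figure~\ref{sign-reversal} on the level of Gauss diagrams, it reverses the sign of a single chord $\gamma$ while keeping the orientation of $\gamma$ and all remaining data unchanged. As in the proof of Lemma~\ref{lem-reverse}(ii), I would use the factorization: the sign reversal of $\gamma$ is the crossing change at $\gamma$ (which simultaneously reverses the orientation of $\gamma$ and flips its sign) followed by a reversal of the orientation of $\gamma$ back to its original direction. The first factor is handled by Lemma~\ref{lem-ccc} as above. For the second factor I would reuse the deformation behind Lemma~\ref{lem-or}: one reinspects Figure~\ref{pf-lem-or} and checks that the virtualized $\Delta$-move appearing there can be chosen of type $rv\Delta$; if the natural choice instead produces a move of another type, one converts it to an $rv\Delta$-move by performing crossing changes at its three real crossings, exactly as in the proof of Lemma~\ref{lem-ccc}, and absorbs those crossing changes using the black box. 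Composing the two sub-sequences and simplifying with generalized Reidemeister moves then realizes SR as a combination of $rv\Delta$-moves and generalized Reidemeister moves.

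The main obstacle is the bookkeeping of which of the four types $rv\Delta$, $vr\Delta$, $rv\Delta'$, $vr\Delta'$ actually occurs at each step: Figures~\ref{pf-lem-cc} and \ref{pf-lem-or} were not drawn with this distinction in mind, so they must be reinspected and, where needed, each occurring move normalized to $rv\Delta$ by crossing changes. Because crossing changes are themselves supplied by $rv\Delta$-moves via Lemma~\ref{lem-ccc}, this normalization never leaves the allowed class, and once it is done the assembly of SR is routine; producing the single resulting figure is then the only remaining work.
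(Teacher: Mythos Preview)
Your proposal is correct and matches the paper's approach: the paper also factors SR as the orientation-reversal deformation of Lemma~\ref{lem-or} composed with a crossing change, noting that the single $v\Delta$-move in Figure~\ref{pf-lem-or}, read in the appropriate direction, is already of type $rv\Delta$, so no normalization by crossing changes is needed. The only cosmetic difference is the order of the two factors---you apply the crossing change first and the orientation reversal second, while the paper does the reverse.
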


\begin{figure}[htbp]
  \centering
    \begin{overpic}[width=6cm]{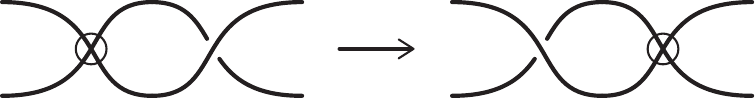}
      \put(78.5,16){SR}
    \end{overpic}
  \caption{A local deformation SR}
  \label{sign-reversal}
\end{figure}

\begin{proof}
Figure~\ref{pf-lem-sr} indicates the proof. 
More precisely, 
the first deformation in this figure 
is obtained from the deformation sequence in Figures~\ref{pf-lem-or} 
by replacing $\stackrel{v\Delta}{\longleftrightarrow}$ 
with $\stackrel{rv\Delta}{\longrightarrow}$. 
The second deformation 
is a crossing change which is realized by an $rv\Delta$-move 
and generalized Reidemeister moves 
by Lemma~\ref{lem-ccc}. 
\end{proof}

\begin{figure}[htbp]
  \centering
    \begin{overpic}[width=10cm]{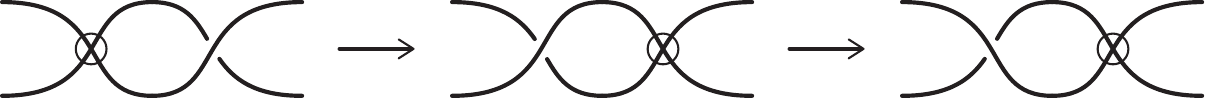}
      \put(190,16){cc}
    \end{overpic}
  \caption{Proof of Lemma~\ref{lem-sr}}
  \label{pf-lem-sr}
\end{figure}

\begin{remark} 
The local deformation SR in Figure~\ref{sign-reversal} 
is called a {\it sign reversal move}~\cite{ABMW}. 
\end{remark}

\begin{theorem}\label{thm-relation}
For any $X\ne Y\in\{rv\Delta, vr\Delta, rv\Delta', vr\Delta'\}$, 
a $Y$-move is realized by a combination of 
$X$-moves and generalized Reidemeister moves. 
\end{theorem}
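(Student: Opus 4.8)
The plan is to isolate the ingredients that a single type $X$ already supplies — crossing changes, the SR move, and the interchangeability of the three virtual crossings appearing in the local pictures — and then to assemble any other type $Y$ from these. By Lemma~\ref{lem-ccc}, every type $X$ realizes a crossing change together with generalized Reidemeister moves, so crossing changes are available from the outset. I would then check that every type realizes the SR move: Lemma~\ref{lem-sr} gives this for $X=rv\Delta$, and for the remaining three types the deformation sequence behind Lemma~\ref{lem-sr} still applies after reading it in reverse and/or passing to its mirror image, the extra crossing changes thereby introduced being supplied again by Lemma~\ref{lem-ccc}. Equivalently, reversing the direction of a move reduces $vr\Delta$ to $rv\Delta$ and $vr\Delta'$ to $rv\Delta'$, while the mirror symmetry interchanging $\Delta$ with $\Delta'$ reduces the primed cases to the unprimed ones, so that it essentially suffices to run everything from $rv\Delta$.

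The core step is the observation, to be read off from Figure~\ref{4types}, that the real-crossing side of any one of the four local pictures is carried to the real-crossing side of any other by a finite sequence of crossing changes and SR moves performed on the three relevant strands, whereas the virtual-crossing sides of all four pictures agree up to generalized Reidemeister (detour) moves. Granting this, a $Y$-move is realized as follows: starting from the input tangle of the $Y$-move, apply the appropriate crossing changes and SR moves — each itself a combination of $X$-moves and generalized Reidemeister moves, by the previous paragraph — to reach the input tangle of an $X$-move; perform that $X$-move; and then apply detour moves to reach the output tangle of the $Y$-move. Transitivity over the (at most twelve) ordered pairs $X\ne Y$ then yields the theorem.

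The main obstacle is the bookkeeping in the core step: one must determine precisely which crossing changes and SR moves convert one type's real-crossing configuration into another's, keeping careful track of orientations and signs, and then confirm that the resulting virtual-crossing configurations are genuinely related by generalized Reidemeister moves. This is a finite case check against Figure~\ref{4types}. It is also the point at which one must make sure that the SR move — and not merely crossing changes — is genuinely at one's disposal for whichever type $X$ one has started from, which is exactly why the reduction of the SR argument to the $rv\Delta$ case in the first paragraph needs to be carried out with some care.
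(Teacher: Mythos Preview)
Your recipe has a genuine gap at the most delicate point: realizing a $vr$-type move from an $rv$-type move (or vice versa). Concretely, take $X=rv\Delta$ and $Y=vr\Delta$. The input tangle of $Y$ consists of three \emph{virtual} crossings, and neither a crossing change nor an SR move can act on a virtual crossing, so your instruction ``apply crossing changes and SR moves to reach the input tangle of an $X$-move'' is impossible to carry out here. The same problem arises for every pair $(X,Y)$ in which $X$ and $Y$ point in opposite directions (one takes real to virtual, the other virtual to real). Your symmetry reductions do not help: reading a sequence in reverse sends the implication $rv\Delta\Rightarrow vr\Delta$ to $vr\Delta\Rightarrow rv\Delta$, and taking mirrors sends it to $rv\Delta'\Rightarrow vr\Delta'$; all of these are still opposite-direction pairs, so no amount of reversal/mirror symmetry produces a bridge between the two direction classes from a same-direction base case.

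Even within same-direction pairs your recipe is stated slightly off: for $X,Y\in\{vr\Delta,vr\Delta'\}$ the inputs are both the \emph{same} virtual tangle, while the outputs are the two different real tangles, so the cc/SR adjustment must be applied at the \emph{output} end, not the input end as you wrote. That is easily repaired; the opposite-direction case is not. In the paper, the step $rv\Delta\Rightarrow vr\Delta$ is established by an explicit construction (Figure~\ref{pf-thm-relation}) that is genuinely different from your template: starting from the three virtual crossings one first uses generalized Reidemeister moves to create a larger configuration containing a new real triangle, applies a single $rv\Delta$-move to \emph{that} triangle, then performs three SR moves, and finally simplifies by Reidemeister moves to obtain the three real crossings. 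This creation of auxiliary real crossings via Reidemeister moves, so that an $rv\Delta$-move can be applied somewhere other than the original site, is the missing idea in your proposal. Once this case is in hand, the paper closes the cycle $rv\Delta\Rightarrow vr\Delta\Rightarrow vr\Delta'\Rightarrow rv\Delta'\Rightarrow rv\Delta$ (the middle step via three crossing changes, the last two by the symmetries you mention), which suffices by transitivity.
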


\begin{proof} 
We use the notation ``$X\Rightarrow Y$'' if a $Y$-move is 
realized by a combination of $X$-moves 
and generalized Reidemeister moves. 
Then we have the following. 

\begin{itemize}
\item
$rv\Delta\Rightarrow vr\Delta$: 
The sequence in Figure~\ref{pf-thm-relation} shows that 
a $vr\Delta$-move is realized by a combination of 
an $rv\Delta$-move, three sign reversal moves, 
and several generalized Reidemeister moves. 
Therefore we have $rv\Delta\Rightarrow vr\Delta$ by Lemma~\ref{lem-sr}. 

\item
$vr\Delta\Rightarrow vr\Delta'$: 
The sequence in Figure~\ref{pf-thm-relation2} shows that 
a $vr\Delta'$-move is realized by a $vr\Delta$-move and three crossing changes. 
Therefore we have $vr\Delta\Rightarrow vr\Delta'$ by Lemma~\ref{lem-ccc}.

\item
$vr\Delta'\Rightarrow rv\Delta'$:
We may follow any sequence for $rv\Delta\Rightarrow vr\Delta$ 
with opposite crossing information in reverse. 
\item
$rv\Delta'\Rightarrow rv\Delta$:
We may follow any sequence for $vr\Delta\Rightarrow vr\Delta'$ 
with opposite crossing information in reverse. 
\end{itemize}
Therefore we have the cycle 
$$rv\Delta\Rightarrow vr\Delta\Rightarrow 
vr\Delta'\Rightarrow rv\Delta'\Rightarrow rv\Delta,$$
which induces the other eight cases immediately. 
\end{proof}

\begin{figure}[htbp]
  \centering
    \begin{overpic}[width=12cm]{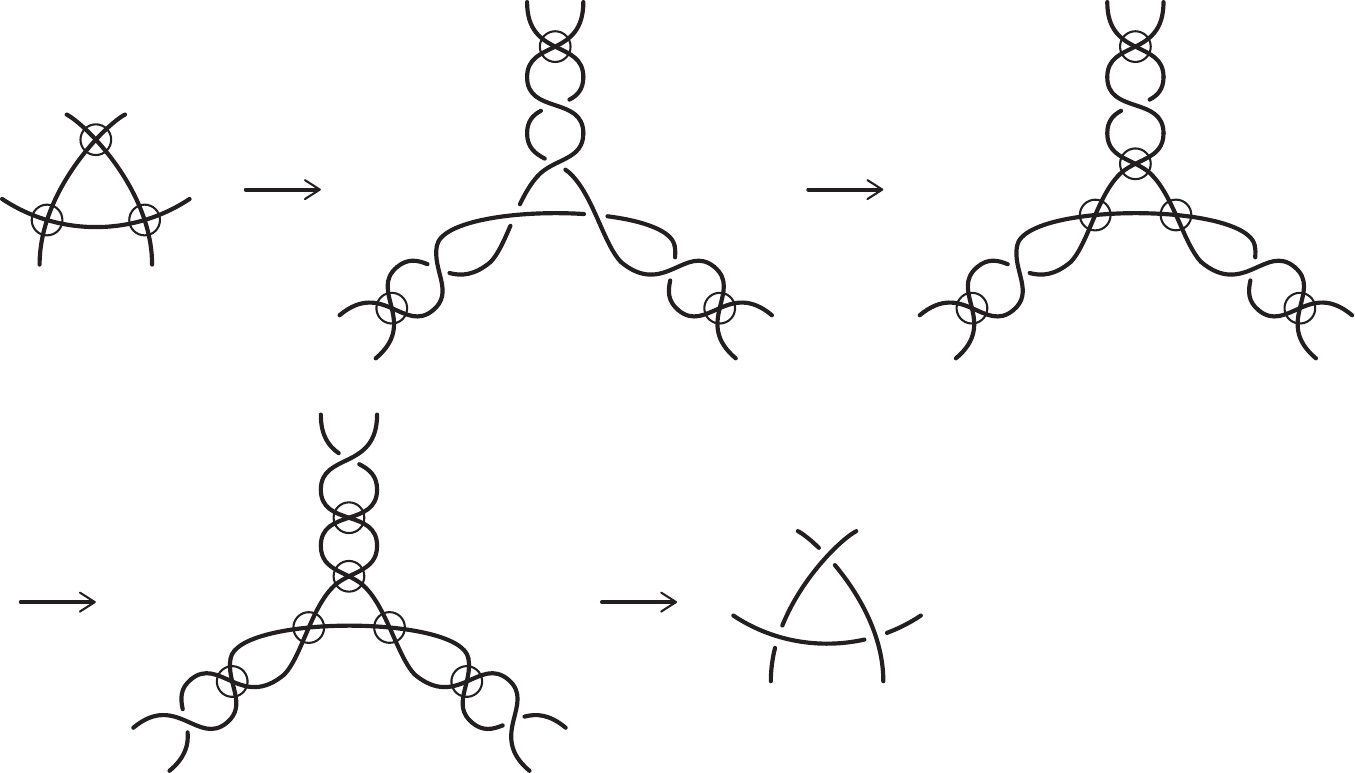}
      \put(67,152){R}
      \put(204,152){$rv\Delta$}
      \put(7,48){SR}
      \put(157,48){R}
    \end{overpic}
  \caption{A $vr\Delta$-move is realized by $rv\Delta$-moves}
  \label{pf-thm-relation}
\end{figure}

\begin{figure}[htbp]
  \centering
    \begin{overpic}[width=9cm]{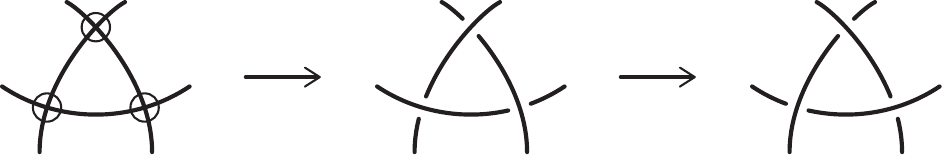}
      \put(67.5,26){$vr\Delta$}
      \put(173.5,26){cc}
    \end{overpic}
  \caption{A $vr\Delta'$-move is realized by $vr\Delta$-moves}
  \label{pf-thm-relation2}
\end{figure}


\end{document}